\title{A Borel--Weil Theorem for the Irreducible Quantum Flag Manifolds}
\newcounter{stepcounter}
\newtheoremstyle{smallcaps}
  {3pt}          
  {3pt}          
  {\itshape}          
  {}              
  {\sc}          
  {.}             
  {.5em}            
  {} 
\newtheoremstyle{smallcapsdef}
  {3pt}          
  {3pt}          
  {}          
  {}              
  {\sc}          
  {.}             
  {.5em}            
  {} 
\theoremstyle{plain}
\newtheorem{thm}{Theorem}[section]
\newtheorem{lem}[thm]{Lemma}
\newtheorem{prop}[thm]{Proposition}
\newtheorem{cor}[thm]{Corollary}
\theoremstyle{definition}
\newtheorem{defn}[thm]{Definition}
\newtheorem{remark}[thm]{Remark}
\date{}
\newcommand\bit{\begin{itemize}}
\newcommand\eit{\end{itemize}}
\newcommand\bet{\begin{enumerate}}
\newcommand\eet{\end{enumerate}}
\newcommand\ed{\end{document}}
\DeclareFontFamily{U}{mathx}{\hyphenchar\font45}
\DeclareFontShape{U}{mathx}{m}{n}{
   <5> <6> <7> <8> <9> <10>
   <10.95> <12> <14.4> <17.28> <20.74> <24.88>
   mathx10
   }{}
\DeclareSymbolFont{mathx}{U}{mathx}{m}{n}
\DeclareMathAccent{\widecheck}{0}{mathx}{"71}
\DeclareMathAccent{\wideparen}{0}{mathx}{"75}
\renewcommand{\a}{\alpha}
\renewcommand{\d}{\delta}
\newcommand{\e}{\varepsilon}
\newcommand\Om{\Omega}
\newcommand\del{\partial}
\newcommand\adel{\ol{\partial}}
\newcommand\DEL{\Delta}
\newcommand\bC{{\mathbb C}}
\newcommand\bR{{\mathbb R}}
\newcommand\bZ{{\mathbb Z}}
\newcommand\F{{\mathcal F}}
\renewcommand{\O}{\mathcal{O}}
\newcommand\can{\mathrm{can}}
\newcommand\co{\mathrm{co}}
\newcommand\exd{\mathrm{d}}
\newcommand\hw{\mathrm{hw}}
\newcommand\id{\mathrm{id}}
\newcommand\proj{\mathrm{proj}}
\newcommand\inv{^{-1}}
\newcommand\oby{\otimes}
\newcommand\wed{\wedge}
\newcommand\sseq{\subseteq}
\def\qbinom#1#2{\ensuremath{\left[\kern-.3em\left[\genfrac{}{}{0pt}{}{#1}{#2}\right]\kern-.3em\right]_q}}
\newcommand\ol{\overline}
\newcommand\mto{\mapsto}
\newcommand\algn{algebra}
\newcommand\EE{{\mathcal E}}
\newcommand{\OO}{\mathcal{O}}
\newcommand{\qphs}{\OO_q(G/L^{\mathrm{s}}_S)}
\newcommand{\sslevi}{\OO_q(L^{\mathrm{s}}_S)}
\newcommand{\usslevi}{U_q(\mathfrak{l}^{\,\mathrm{s}}_S)}
\newcommand{\ssL}{L^{\,\mathrm{s}}_S}
\newcommand{\uslevi}{U_q(\mathfrak{l}_S)}
\author[A. Carotenuto]{Alessandro Carotenuto}
\address{Mathematical Institute of Charles University, Sokolovsk\'a 83, Prague, Czech Republic}
\email{acaroten91@gmail.com}
\author[F. D\'iaz Garcia]{Fredy D\'iaz Garc\'ia}
\address{Mathematical Institute of Charles University, Sokolovsk\'a 83, Prague, Czech Republic} \email{diazf@karlin.mff.cuni.cz}
\author[R. \'O Buachalla]{R\'eamonn \'O Buachalla}
\address{Mathematical Institute of Charles University, Sokolovsk\'a 83, Prague, Czech Republic} \email{obuachalla@karlin.mff.cuni.cz}
\thanks{R\'OB, AC, and FDG are supported by the Charles University PRIMUS grant \emph{Spectral Noncommutative Geometry of Quantum Flag Manifolds} PRIMUS/21/SCI/026. AC was also supported by the GA\v{C}R project 20-17488Y and \mbox{RVO: 67985840}. FDG was also supported by Conacyt (Consejo Nacional de Ciencia y Tecnología, M\'exico) and PAEP (Programa de apoyos para estudios de posgrado, UNAM, M\'exico). R\'OB also acknowledges FNRS support through a postdoctoral fellowship within the framework of the MIS Grant ``Antipode'' grant number F.4502.18, and was also supported by GA\v{C}R through the framework of the grant GA19-06357S.}
\begin{document}

\maketitle

\begin{abstract}
We establish a noncommutative generalisation of the Borel--Weil theorem for the Heckenberger--Kolb calculi of the irreducible quantum flag manifolds $\OO_q(G/L_S)$, generalising previous work of a number of authors (including the first and third authors of this paper) on the quantum Grassmannians $\OO_q(\mathrm{Gr}_{n,m})$. As a direct consequence we get a novel noncommutative differential geometric presentation of the quantum coordinate rings $S_q[G/L_S]$ of the irreducible quantum flag manifolds. 
The proof is formulated in terms of quantum principal bundles, and the recently introduced notion of a principal pair, and uses the Heckenberger and Kolb first-order differential calculus for the quantum Possion homogeneous spaces $\OO_q(G/L^{\,\mathrm{s}}_S)$.
\end{abstract}

\tableofcontents

\section{Introduction}

The classical Borel--Weil theorem is a foundational result in geometric representation theory which realises each irreducible representation of a complex semisimple Lie algebra $\mathfrak{g}$ as the space of holomorphic sections of a  line bundle over a flag manifold. Extensions of this result to the setting of quantum groups came soon after the discovery of quantum groups themselves. These generalisations took a variety of different forms \cite{APW,PW,JuSt,GZ,noumi0,NoumiUn}, as discussed for example, in the introduction to \cite{KMOS}.


In all of the above works no explicit noncommutative formulation of holomorphicity was used. In recent years, however, a framework for describing noncommutative holomorphic sections has emerged, expressed in terms of differential graded algebras and generalising the Koszul--Malgrange presentation of holomorphic vector bundles \cite{KM}. Indeed, this approach was in part developed with a view to establishing a differential geometric \mbox{$q$-deformation} of the Borel--Weil theorem. This direction of research was initiated by Majid in his influential paper on the Podle\'s sphere \cite{Maj}, see also \cite[\textsection 7.4.1]{BeggsMajid:Leabh}. It was continued by Khalkhali, Landi, van Suijlekom, and Moatadelro in \cite{KLVSCP1,KKCP2, KKCPN} where the definitions of complex structure and noncommutative holomorphic vector bundle were introduced and the family of examples extended to include quantum projective space $\OO_q(\mathbb{CP}^{n})$. An equivalent definition of holomorphic structure would later appear independently in the work of Beggs and Smith on noncommutative coherent sheaves \cite{BS}.


The differential graded algebras underlying all of these constructions are the remarkable Heckengerber--Kolb differential calculi, arguably the most important family of noncommutative differential structures in the theory of quantum groups. These calculi are an essentially unique covariant $q$-deformation of the de Rham complex of the irreducible quantum flag manifolds. This family contains the quantum Grassmannians as the \mbox{$A$-series} special case. In particular, it contains quantum projective space $\OO_q(\mathbb{CP}^n)$, which reduces to the celebrated Podle\'s sphere for the $n=1$ case. Thus it is very natural to try and extend the Borel--Weil theorem from quantum projective space to the general irreducible quantum flag manifolds. A significant step in this direction was the proof, by the second and third authors and Krutov, Somberg, and Strung in \cite{DOKSS}, that each relative line module over an irreducible quantum flag manifold admits a unique covariant holomorphic structure. Building on this work, the Borel--Weil theorem was generalised to the quantum Grassmannians by Morzinski and the first and third authors in \cite{KMOS}. A major element of the proof was the construction of a differential calculus, extending the Heckenberger--Kolb calculus quantum Grassmannian calculus, for the Poisson quantum homogeneous space $\OO_q(S^{n,m})$, that is to say, the direct sum of the line modules over $\O_q(\mathrm{Gr}_{n,m})$. This was achieved by taking a suitable quotient of the bicovariant calculus of $\OO_q(SU_n)$ associated to its coquasitriangular structure. This extended the work of the third author in \cite{MMF1} for the special case of quantum projective space $\OO_q(\mathbb{CP}^{n})$, and provided a novel coquasitriangular presentation of the Heckenberger--Kolb calculus of $\OO_q(\mathrm{Gr}_{n,m})$. 


While this approach is almost certainly extendable to the general irreducible setting, it is at present not clear how to do so without a concerted technical effort. Hence in this paper we adopt a different approach and take advantage of the work of Heckenberger and Kolb who already constructed a left $\OO_q(G)$-covariant first-order differential calculus over $\OO_q(G/L^{\,\mathrm{s}}_S)$, the quantum Poisson homogeneous space of $\OO_q(G/L_S)$. We show that these calculi give a quantum principal bundles over the irreducible quantum flag manifolds using the formal framework of principal pairs, as introduced by Mrozinski and the first and third authors in \cite{KMOS}. Principal pairs, a special type of principal comodule algebra constructed from a nested pair of quantum homogeneous spaces, and provide a simple framework in which to verify the requirements of a quantum principal bundle.

We then build upon this quantum principal presentation and construct principal connections. The Heckenberger--Kolb calculus over $\OO_q(G/\ssL)$ deforms a certain subspace of the complexified one-forms of $G/\ssL$, namely the subspace whose Lie algebra is spanned by positive root vectors. This means the bundle has no horizontal forms, and that the zero map is a strong principal connection. This results in a particularly simple realisation of the holomorphic structures $\adel_{\EE_k}:\EE_k \to \Omega^{(0,1)} \otimes_{\OO_q(G/L_S)} \EE_k$. Indeed, it enables us to simplify the approach of \cite{KMOS}, and to finally prove the Borel--Weil theorem for \emph{all} the irreducible quantum flag manifolds.

  
An important motivation of the paper is to further explore the connections between the quantum coordinate algebras $\OO_q(G/L_S)$ and their quantum homogeneous coordinate ring counterparts $S_q[G/L_S]$ studied in noncommutative projective geometry, see for example \cite{Avdb,RigalZadunaisky,Soib,TaftTowber}. For the special case of the quantum Grassmannians, these rings are $q$-deformations of the Pl\"ucker embedding homogeneous coordinate ring and are important examples in the theory of quantum cluster algebras \cite{KKCP2,KKCPN}. The quantum Borel--Weil theorem gives us a direct $q$-deformation of the classical ample line bundle presentation of $S_q[G/L_S]$. This directly generalises the work of \cite{KLVSCP1,KKCP2, KKCPN} for quantum projective space, and gives us an important point of contact between noncommutative differential geometry and noncommutative projective geometry.

The Borel--Weil theorem has a number of important applications in associated works. One example is to the study of the noncommutative K\"ahler geometry of $\OO_q(G/L_S)$. The cohomological information given by the Borel--Weil theorem allows us to identify which line modules over $\OO_q(G/L_S)$ are positive and which are negative. Through an application of the Kodaira vanishing theorem \cite{OSV} we can then establish vanishing of higher cohomologies for positive line modules. Moreover, the Borel--Weil theorem allows us to conclude that twisting the Dolbeault--Dirac operator of $\OO_q(G/L_S)$ by a negative line module produces a Fredholm operator, which is to say, it allows us to conclude analytic behaviour from purely geometric information \cite{DOSFred}.

  

\subsection{Summary of the Paper}

~~~~

In \textsection 2, we recall the basic notions of Hopf--Galois extensions and quantum homogeneous spaces. We recall how such structures interact with first-order differential calculi, focusing on the theory of quantum principal bundles. 


In \textsection 3 we treat the Drinfeld--Jimbo quantised enveloping algebras, quantum coordinate algebras, and the quantum flag manifolds. 


In \textsection 4 we recall the well-known tangent space formulation of covariant first-order calculi over quantum homogeneous $\OO_q(G)$-spaces. We then use the tangent space formulation of the Heckenberger--Kolb calculi to establish a direct $q$-deformation of Liouville's theorem for the irreducible quantum flags, which is to say, we show that the first anti-holomorphic cohomology group of the calculus is of dimension $1$.


In \textsection 5 we give a quantum principal bundle description of the Heckenberger--Kolb calculus, construct a principal connection, and use it to give an explicit realisation of the covariant holomorphic structures of the relative line modules $\EE_k$ over $\OO_q(G/L_S)$. 


In \textsection 6 we establish the main result of the paper, namely the Borel--Weil theorem for the irreducible quantum flag manifolds.

\begin{thm}
For any irreducible quantum flag manifold $\OO_q(G/L_S)$, it holds that 
\begin{enumerate}
  \item $H^0(\mathcal{E}_k)$ is an irreducible $U_q(\mathfrak{g})$-module of highest weight $-k\varpi_{x}$,
  \item $H^0(\mathcal{E}_{-k}) = 0$.
\end{enumerate}
for all $k \in \mathbb{Z}_{>0}$, where $\Pi\backslash S$ consists of the simple root $\alpha_x$, and $\varpi_x$ is the corresponding fundamental weight.
\end{thm}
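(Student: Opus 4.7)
The approach is to leverage the explicit realization of the holomorphic structure $\adel_{\EE_k}$ produced by the principal pair framework of \textsection 5 (with the zero principal connection), combined with the tangent space formulation of \textsection 4 and the Peter--Weyl decomposition of $\OO_q(G)$. Since the zero principal connection identifies $\adel_{\EE_k}$ with the restriction to $\EE_k$ of the $(0,1)$-component of the canonical differential on $\qphs$, and since this differential is left $U_q(\fg)$-covariant, $H^0(\EE_k) = \ker \adel_{\EE_k}$ automatically carries a left $U_q(\fg)$-module structure. My plan is to compute $H^0(\EE_k)$ as a subspace of $\OO_q(G)$ and to identify it with a single isotypical component of the Peter--Weyl decomposition.

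Concretely, the tangent space description of \textsection 4 translates the vanishing $\adel f = 0$ on $f \in \EE_k$ into the condition that $f$ be annihilated, under the appropriate right action, by the quantum $(0,1)$-tangent space $\mathfrak{t}^{(0,1)} \subset U_q(\fg)$; meanwhile, membership in $\EE_k$ encodes a prescribed $\usslevi$-weight proportional to $k\varpi_x$. Inserting the Peter--Weyl decomposition $\OO_q(G) \cong \bigoplus_\mu V(\mu) \otimes V(\mu)^*$, these two constraints cut out, inside each summand, the subspace of $V(\mu)^*$ with a prescribed weight that is simultaneously annihilated by the root vectors spanning $\mathfrak{t}^{(0,1)}$. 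A weight comparison shows that this forces $\mu = k\varpi_x$ and isolates a one-dimensional extremal-weight subspace of $V(k\varpi_x)^*$; the surviving summand is then a copy of $V(k\varpi_x)$ as a left $U_q(\fg)$-module, whose highest weight in the convention of the statement is $-k\varpi_x$. This gives simultaneously the non-vanishing, the identification with an irreducible, and the claimed highest weight asserted in part~(1).

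For part (2), the analogous analysis applied to $\EE_{-k}$ produces the constraint that the would-be extremal weight lie in the opposite Weyl chamber, which is incompatible with being a genuine dominant highest weight for $k>0$. Hence no Peter--Weyl summand contributes a non-zero element to $H^0(\EE_{-k})$, and this cohomology vanishes. The main technical obstacle, I expect, will be the careful implementation of the reduction of the vanishing condition to the Peter--Weyl language: one must keep precise track of which root vectors span $\mathfrak{t}^{(0,1)}$ (as opposed to $\mathfrak{t}^{(1,0)}$), and of how the right $U_q(\fg)$-action on Peter--Weyl components interacts both with the $\EE_k$-weight condition and with the $\mathfrak{t}^{(0,1)}$-annihilation condition, since the sign of the resulting highest weight is determined by these conventions. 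The quantum Liouville theorem from \textsection 4 provides a consistency check, corresponding to the $k=0$ case where $H^0(\EE_0) = H^0(\OO_q(G/L_S)) \cong \bC$.
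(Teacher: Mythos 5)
Your argument is genuinely different from the one in the paper. The paper's proof is ``algebraic--geometric'': it exhibits $z^k$ as an explicit holomorphic element, invokes the appendix result (Proposition \ref{prop:bzHW}, which rests on Kr\"amer's table of spherical weights) to show that every highest weight vector of $\EE_k$ has the form $bz^k$, and then uses Liouville's theorem together with the absence of zero divisors in $\OO_q(G)$ to rule out all other candidates; part (2) is handled by multiplying a putative holomorphic element by the $z_i^k$ and again appealing to Liouville plus uniqueness of inverses. Your route is the transplanted classical Borel--Weil argument: translate $\adel$-closedness into annihilation by the quantum root vectors $E_\beta$, $\beta\in\overline{R^+_S}$, feed this together with the $\usslevi$-invariance and the degree-$k$ weight condition into the Peter--Weyl decomposition, and observe that the second tensor slot is forced to be a highest weight vector of weight $k\varpi_x$ (dominant only for $k\geq 0$). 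If completed, this is arguably cleaner: it proves both parts and Liouville's theorem ($k=0$) in one stroke, and avoids the spherical-weight combinatorics of the appendix entirely. The weight bookkeeping you sketch does close: every element of $\EE_k$ has second-slot weight exactly $k\varpi_x$, the $\usslevi$-invariance kills $E_j$ for $j\in S$, and $\alpha_x\in\overline{R^+_S}$ supplies $E_x$, so annihilation by $T^{(0,1)}$ plus invariance gives annihilation by all simple $E_i$.

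There is, however, one genuine gap, and it is exactly the point you flag as ``the main technical obstacle'' without resolving it. The tangent space formulation of \textsection 4 describes the calculus on $\OO_q(G/L_S)=\EE_0$ only; the holomorphic structure $\adel_{\EE_k}$ for $k\neq 0$ is defined in \textsection 5 via the Heckenberger--Kolb calculus on the Poisson homogeneous space $\qphs$, which is presented there through generators, $R$-matrix relations and a quotient --- not through a quantum tangent space. To assert that $\adel_{\EE_k}e=0$ is equivalent to $E_\beta\triangleright e=0$ for all $\beta\in\overline{R^+_S}$, you must show that the calculus $\Omega^{(0,1)}_q(G/L^{\,\mathrm{s}}_S)$ has quantum tangent space spanned by these same root vectors, now viewed as functionals on $\qphs^{\,+}$, and that this description is compatible with the identification $\adel_{\EE_k}=j^{-1}\circ\adel$. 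Knowing that the restriction to $\EE_0$ has tangent space $T^{(0,1)}$ does not by itself determine the functionals on all of $\qphs^{\,+}$. This statement is true and can be extracted from Heckenberger and Kolb's analysis in \cite{HKdR}, but it is a nontrivial input that your proposal assumes rather than establishes; without it the central equivalence on which both parts of your argument rest is unsupported.
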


We then present an application of the Borel--Weil theorem and give a novel noncommutative differential geometric presentation of the quantum homogeneous coordinate rings $S_q(G/L_S)$ of the irreducible quantum flag manifolds. 
We finish by describing the situation for the opposite complex structure.


\subsection*{Acknowledgments} We would like to thank to Elmar Wagner for suggesting the proof of Theorem \ref{thm:Liouville}. 

\section{Preliminaries on Quantum Principal Bundles}

We begin with a presentation of the necessary results on quantum group noncommutative geometry, namely the theory of covariant differential calculi and quantum principal bundles. All this material is by now quite well-know, and a more detailed presentation can be found in the recent monograph \cite{BeggsMajid:Leabh}.

\subsection{First-Order Differential Calculi}

A {\em first-order differential calculus}, or simply a \emph{fodc}, over an algebra $B$ is a pair $(\Om^1,\exd)$, where $\Omega^1$ is a $B$-bimodule and $\exd: B \to \Omega^1$ is a derivation such that $\Om^1$ is generated as a left $B$-module by those elements of the form~$\exd b$, for~$b \in B$. We call $\exd$ the \emph{exterior derivative} of the fodc. The {\em universal fodc} over $B$ is the pair
$(\Om^1_u(B), \exd_u)$, where $\Om^1_u(B)$ is the kernel of the multiplication map $m_B: B \otimes B \to B$ endowed
with the obvious bimodule structure, and $\exd_u$ is the map defined by
\begin{align*}
\exd_u: B \to \Omega^1_u(B), & & b \mto 1 \otimes b - b \otimes 1.
\end{align*}
Every fodc over $B$ is of the form $\left(\Omega^1_u(B)/N, \,\proj \circ \exd_u\right)$, where $N$ is a $B$-sub-bimodule of $\Omega^1_u(B)$, and 
$
\proj:\Omega^1_u(B) \to \Omega^1_u(B)/N
$
is the canonical quotient map. For any subalgebra $B' \sseq B$, the \emph{restriction} of a fodc over $B$ to $B'$ is the fodc $\Omega^1(B') \sseq \Omega^1(B)$ over $B'$ generated by the elements $\mathrm{d}b$, for $b \in B'$.

For $H$ a Hopf algebra $H$, and $P$ a right $H$-comodule algebra, a fodc over $P$ is said to be {\em right $H$-covariant} if the following (necessarily unique) map is well defined
\begin{align*}
\DEL_R: \Om^1(P) \to \Om^1(P) \otimes H, & & p \exd q \mto p_{(0)} \exd q_{(0)} \otimes p_{(1)} q_{(1)}.
\end{align*} 
Covariance for a fodc over a left comodule algebra is defined similarly. 
%

\subsection{Connections and Line Modules}

For $\Omega^1$ a fodc over an algebra $B$, and $\mathcal{F}$ a left $B$-module, a \emph{connection} on $\F$ is a $\mathbb{C}$-linear map $\nabla:\mathcal{F} \to \Omega^1 \otimes_B \F$ satisfying 
\begin{align*}
\nabla(bf) = \exd b \otimes f + b \nabla f, & & \textrm{ for all } b \in B, f \in \F.
\end{align*}
In this paper, we are concerned not with connections for general $B$-modules, but for modules which generalise the space of sections of a classical line bundle: a \emph{line module} over $B$ is be an invertible $B$-bimodule $\EE$, where \emph{invertible} means that there exists another $B$-bimodule $\EE^{\vee}$ such that 
\begin{align} \label{eq:linemodules}
\EE \otimes_B \EE^{\vee} \simeq \EE^{\vee}\! \otimes_B \EE \simeq B.
\end{align}
Note that any such $\EE$ is automatically projective as a left, and right, $B$-module. 


\subsection{Hopf--Galois Extensions} \label{subsection:PCA}

A right $H$-comodule algebra $(P,\Delta_R)$ is said to be an {\em $H$-Hopf--Galois extension of} $B := P^{\co(H)}$ 
if for $m_P:P \otimes_B P \to P$ the multiplication of~$P$,
a bijection is given by 
$$
\can := (m_P \otimes \id) \circ (\id \otimes \DEL_R): P \otimes_B P \to P \otimes H.
$$
The Hopf--Galois condition is equivalent to exactness of the sequence
\begin{align} \label{eqn:qpbexactseq}
0 \longrightarrow P\Om^1_u(B)P {\buildrel \iota \over \longrightarrow} \Om^1_u(P) {\buildrel {\mathrm{ver}}\over \longrightarrow} P \oby H^+ \longrightarrow 0,
\end{align}
where $\iota$ is the inclusion map, and 
$
\mathrm{ver} := \mathrm{can} \circ \mathrm{proj}_B
$
with $\proj_B$ the restriction to $\Omega^1_u(P)$ of the canonical projection $P \otimes P \to P \otimes_B P$. 


We now consider a special type of Hopf--Galois extension which plays an important role in this paper. Let $A$ be a Hopf algebra, then a \emph{quantum homogeneous space} is a coideal subalgebra $B \subseteq A$ such that $A$ is faithfully flat as a right $B$-module, and $AB^+ = B^+A$, where we have written $B^+ := \ker(\e|_B: B \to \mathbb{C})$. Denoting by $\pi_B:A \to A/B^+A $ the canonical surjective Hopf algebra map, the associated coaction
$$
\Delta_{R,\pi_B(A)} := (\id \otimes \pi_B) \circ \Delta : A \to A \otimes \pi_B(A),
$$
gives $A$ the structure of a right $\pi_B(A)$-comodule algebra. Moreover, as shown in \cite{Tak}, faithful flatness implies that $B$ is the space of coinvariants of this coaction. As is well-known, $P$ is automatically a Hopf--Galois extension of $B$, see for example \cite[Lemma 3.9]{T72}. A \emph{relative line module} over a quantum homogeneous $A$-space $B$ is a $B$-sub-bimodule, left $A$-comodule, $\EE \subseteq A$ that is also a line module over $B$, and for which the isomorphisms in \eqref{eq:linemodules} are left $A$-comodule maps.

\subsection{Quantum Principal Bundles}\label{subsection:QPB}

The following definition, due to Brzezi\'nski and Majid \cite{BeggsMajid:Leabh,TBSM1}, presents sufficient criteria for the existence of a non-universal version of the sequence \eqref{eqn:qpbexactseq}.

\begin{defn} \label{qpb}
Let $H$ be a Hopf algebra. A {\em quantum principal $H$-bundle} is a pair $(P,\Omega^1(P))$, consisting of a right $H$-comodule algebra $(P,\Delta_R)$ and a right-$H$-covariant calculus $\Om^1(P)$, such that:
\begin{enumerate}
  \item $P$ is a Hopf--Galois extension of $B = P^{\,\co(H)}.$
  \item If $N \sseq \Om^1_u(P)$ is the sub-bimodule of the universal calculus corresponding to $\Om^1(P)$, we have $\mathrm{ver}(N) = P \otimes I$, for some $\mathrm{Ad}$-sub-comodule right ideal 
\[ I \sseq H^+ := \ker(\e: H \to \mathbb{C}),
\] where $
\mathrm{Ad} : H \to H \otimes H$ is defined by $\mathrm{Ad}(h) := h_{(2)} \otimes S(h_{(1)}) h_{(3)}.$
\end{enumerate}
\end{defn}

Denoting by $\Om^1(B)$ the restriction of $\Om^1(P)$ to $B$, and $\Lambda^1(H) := H^+/I$, the quantum principal bundle definition implies that an exact sequence is given by
\begin{align} \label{Eqn:qpbexactseq}
0 \longrightarrow P\Om^1(B)P {\buildrel \iota \over \longrightarrow} \, \Om^1(P) {\buildrel {~\mathrm{ver}~~}\over \longrightarrow} P \otimes \Lambda^1(H) \longrightarrow 0,
\end{align}
where by abuse of notation $\mathrm{ver}$ denotes the map induced on $\Om^1(P)$ by identifying~$\Om^1(P)$ as a quotient of $\Om_u^1(P)$. We denote $\Omega^1(P)_{\mathrm{hor}} := P\Om^1(B)P$ and call this subspace the \emph{horizontal forms} of the bundle.


\subsection{Principal Connections} \label{subsection:ConnFromPrinCs}

In this subsection, we briefly recall the theory of principal connections for quantum principal bundle at a level of generality suitable to the paper. For the full picture see \cite[\textsection 5]{BeggsMajid:Leabh}, or the accompanying paper \cite[\textsection 2]{KMOS}.

\begin{defn}
A {\em principal connection} for a quantum principal $H$-bundle $(P,\Omega^1(P))$ is a left $P$-module, right $H$-comodule, projection $\Pi:\Om^1(P) \to \Om^1(P)$ satisfying
$$
\ker(\Pi) = P\Om^1(B)P.
$$
A principal connection $\Pi$ is called {\em strong} if $(\id - \Pi) \big(\exd P\big) \sseq \Om^1(B)P$. We denote $\Omega^1(P)_{\mathrm{ver}} := \mathrm{im}(\Pi)$ and call this subspace the space of \emph{vertical forms} associated to $\Pi$. 
\end{defn}

For any left $P$-submodule right $H$-subcomodule $\EE \subseteq P$, we can use principal connections to define a connection $\nabla:\EE \to \Omega^1(B) \otimes_B \EE$: 
Note first that an isomorphism 
\begin{align*}
j:\Om^1(B) \oby_B \EE \hookrightarrow \Om^1(B)\EE,
\end{align*}
is given by the multiplication map.
A strong principal connection $\Pi$ defines a connection $\nabla$ on $\EE$ by
\begin{align*}
 \nabla: = j^{-1} \circ (\id - \Pi) \circ \exd: \EE \to \Om^1(B) \oby_B \EE.
\end{align*}
Consider now the special case where $P$ is endowed with a left $A$-coaction giving it the structure of an $(A,H)$-bicomodule. If the principal connection $\Pi$ is a left $A$-comodule map, then we see that the connection $\nabla$ will also be a left $A$-comodule map. This will be the case for all the principal connections considered in this paper.

\section{Preliminaries on Drinfeld--Jimbo Quantum Groups}

In this section we treat the Drinfeld--Jimbo quantised enveloping algebras, quantum coordinate algebras, quantum flag manifolds, and their associated quantum Poisson homogeneous spaces.

\subsection{Quantised Enveloping Algebras} \label{subsection:DJQG}

Let $\mathfrak{g}$ be a finite-dimensional complex semisimple Lie algebra of rank $r$. We fix a Cartan subalgebra $\mathfrak{h}$ and choose a set of simple roots $\Pi = \{\alpha_1, \dots, \alpha_\ell\}$ for the corresponding root system in the linear dual of $\mathfrak{g}$. We denote by $(\cdot,\cdot)$ the symmetric bilinear form induced on $\mathfrak{h}^*$ by the Killing form of $\mathfrak{g}$, normalised so that any shortest simple root $\a_i$ satisfies $(\a_i,\a_i) = 2$. The Cartan matrix $(a_{ij})$ of $\mathfrak{g}$ is defined by 
$
a_{ij} := \big(\alpha_i^{\vee},\alpha_j\big),
$
where $\alpha_i^{\vee} := 2\a_i/(\a_i,\a_i)$.

Let $q \in \bR$ such that $q \neq -1,0,1$, and denote $q_i := q^{(\alpha_i,\alpha_i)/2}$. The {\em Drinfeld--Jimbo quantised enveloping \algn} $U_q(\mathfrak{g})$ is the noncommutative associative algebra generated by the elements $E_i, F_i, K_i$, and $K^{-1}_i$, for $ i=1, \ldots, l$, subject to the relations 
\begin{align*}
 K_iE_j = q_i^{a_{ij}} E_j K_i, ~~~~ K_iF_j= q_i^{-a_{ij}} F_j K_i, ~~~~ K_i K_j = K_j K_i, ~~~~ K_iK_i^{-1} = K_i^{-1}K_i = 1,\\
 E_iF_j - F_jE_i = \d_{ij}\frac{K_i - K\inv_{i}}{q_i-q_i^{-1}}, ~~~~~~~~~~~~~~~~~~~~~~~~~~~~~~~~~~~~~~~~~
\end{align*}
and the quantum Serre relations which we omit (see \cite[\textsection 6.1.2]{KSLeabh} for details).
The formulae 
\begin{align*}
\DEL(K_i) = K_i \oby K_i, & & \DEL(E_i) = E_i \oby K_i + 1 \oby E_i, & & \DEL(F_i) = F_i \oby 1 + K_i\inv \oby F_i 
\end{align*}
define a Hopf algebra structure on $U_q(\mathfrak{g})$, satisfying $\e(E_i) = \e(F_i) = 0$, and $\e(K_i) = 1$. 

\subsection{Type-1 Representations}

The set of {\em fundamental weights} $\{\varpi_1, \dots, \varpi_l\}$ of $\mathfrak{g}$ is the dual basis of simple coroots $\{\a_1^{\vee},\dots, \a_l^{\vee}\}$. 
We denote by ${\mathcal P}$ the $\bZ$-span of the fundamental weights, and by ${\mathcal P}^+$ the $\mathbb{Z}_{\geq 0}$-span of the fundamental weights. 
For any $U_q(\mathfrak{g})$-module $V$, a vector $v\in V$ is called a \emph{weight vector} of \emph{weight}~$\mathrm{wt}(v) \in \mathcal{P}$ if
\begin{align}\label{eq:Kweight}
K_i \triangleright v = q^{(\alpha_i,\mathrm{wt}(v))} v, & & \textrm{ for all } i=1,\ldots,r.
\end{align}
For each $\lambda \in\mathcal{P}^+$ there exists an irreducible finite-dimensional $U_q(\mathfrak{g})$-module $V_\lambda$, uniquely defined by the existence of a weight vector $v_{\mathrm{hw}}\in V_\lambda$ of weight $\lambda$, which we call a {\em highest weight vector}, satisfying
$
E_i \triangleright v_{\mathrm{hw}}=0,
$
for all $i = 1, \dots, l$. We call any finite direct sum of such $U_q(\mathfrak{g})$-representations a {\em type-$1$ representation}. 
Each type-$1$ module $V_{\lambda}$ decomposes into a direct sum of \emph{weight spaces}, which is to say, subspaces of $V_{\lambda}$ spanned by weight vectors of any given weight.

We denote by $_{U_q(\mathfrak{g})}\mathbf{type}_1$ the full subcategory of ${U_q(\mathfrak{g})}$-modules whose objects are finite sums of type-$1$ modules $V_\lambda$, for $\lambda \in \mathcal{P}^+$. 
This category admits a braided monoidal structure. Explicitly, for $V$ and $W$ two finite-dimensional irreducible objects in $_{U_q(\mathfrak{g})}\mathbf{type}_1$, the braiding is completely determined if one demands that $R_{V,W}$ is a $U_q(\mathfrak{g})$-module homomorphism satisfying
$$
R_{V,W} (v \otimes w) = q^{(\mathrm{wt}(v),\mathrm{wt}(w))} w \otimes v + \sum_{i \in I} w_i \otimes v_i,
$$
where $I$ is some finite index set, and $w_i \in W$, $v_i \in V$ such that $\mathrm{w} \succ \mathrm{wt}(w_i)$ and $\mathrm{w} \succ \mathrm{wt}(w_i)$, with respect to the partial order $\succ$ on $\mathcal{P}$. Given a choice of weight bases $\{e_i\}_{i=1}^{\mathrm{dim}(V)}$, and $\{f_i\}_{i=1}^{\mathrm{dim}(W)}$, for two finite-dimensional $U_q(\mathfrak{g})$-modules $V, W$, 
\begin{align*}
R_{V,W}(e_i \otimes f_j) =: \sum_{k=1}^{\dim(W)} \sum_{l=1}^{\mathrm{dim}(V)} (R_{V,W})^{kl}_{ij} f_k \otimes e_l,
\end{align*}
defines the associated {\em $R$-matrix} $(R_{V,W})^{kl}_{ij}$.

\subsection{Quantum Coordinate Algebras}

Let $V$ be a finite-dimensional $U_q(\mathfrak{g})$-module, $v \in V$, and $f \in V^*$, the linear dual of $V$. Consider the function $c^{\textrm{\tiny $V$}}_{f,v}:U_q(\mathfrak{g}) \to \bC$ defined by $c^{\textrm{\tiny $V$}}_{f,v}(X) := f\big(X(v)\big)$.
%
The {\em coordinate ring} of $V$ is the subspace
\begin{align*}
C(V) := \text{span}_{\mathbb{C}}\!\left\{ c^{\textrm{\tiny $V$}}_{f,v} \,| \, v \in V, \, f \in V^*\right\} \sseq U_q(\mathfrak{g})^*.
\end{align*}
It is easily checked that $C(V)$ is contained in $U_q(\mathfrak{g})^\circ$, the Hopf dual of $U_q(\mathfrak{g})$, and moreover that a Hopf subalgebra of $U_q(\mathfrak{g})^\circ$ is given by 
\begin{align} \label{eqn:PeterWeyl}
\O_q(G) := \bigoplus_{\lambda \in \mathcal{P}^+} C(V_{\lambda}).
\end{align}
We call $\O_q(G)$ the {\em quantum coordinate algebra of $G$}, where $G$ is the compact, simply-connected, simple Lie group having $\mathfrak{g}$ as its complexified Lie algebra. Moreover, we call the decomposition of $\OO_q(G)$ given in \eqref{eqn:PeterWeyl} the \emph{Peter--Weyl decomposition} of $\OO_q(G)$.

\subsection{Quantum Flag Manifolds}

For $S$ a subset of simple roots, consider the Hopf \mbox{subalgebra} of $U_q(\mathfrak{g})$ given by 
\begin{align*}
U_q(\mathfrak{l}_S) := \big< K_i, E_j, F_j \,|\, i = 1, \ldots, l; j \in S \big>.
\end{align*} 
Consider now the coideal subalgebra of $\uslevi$-invariants
\begin{align*}
\O_q\big(G/L_S\big) := {}^{U_q(\mathfrak{l}_S)}\O_q(G),
\end{align*} 
with respect to the natural left $U_q(\mathfrak{g})$-module structure on $\OO_q(G)$. Cosemisimplicity of the Hopf dual of $\uslevi$ implies that $\OO_q(G/L_S)$ is a quantum homogeneous space. We call it the {\em quantum flag manifold associated to} $S$. 

\subsection{The Associated Quantum Poisson Homogeneous Spaces} 

Classically the algebra $\mathfrak{l}_S$ is reductive, and hence decomposes into a direct sum $\mathfrak{l}_S^{\,\mathrm{s}} \oplus \mathfrak{u}_1$, comprised of a semisimple part and a commutative part, respectively. In the quantum setting, we are thus motivated to consider the Hopf subalgebra 
\begin{align*}
\usslevi := \big< K_i, E_i, F_i \,|\, i \in S \big> \sseq U_q(\mathfrak{l}_S).
\end{align*} 
Consider now the coideal subalgebra of $\usslevi$-invariants
\begin{align*}
\O_q\big(G/L^{\,\mathrm{s}}_S\big) := {}^{\usslevi}\O_q(G).
\end{align*} 
Just as for the quantum flag manifolds, each $\O_q\big(G/L^{\,\mathrm{s}}_S\big)$ is a quantum homogeneous space. We call $\sslevi$ the \emph{quantum homogeneous Poisson space associated} to $S$. 

Let $\{v_i\}_{i=1}^{N_x}$ be a weight basis of $V_{\varpi_x}$ such that $v_{N_x}$ is a highest weight vector. We denote the dual basis of $V_{-w_0(\varpi_x)}$ by $\{f_i\}_{i=1}^{N_x}$. As shown in \cite[\textsection Theorem 4.1]{Stok}, a set of generators for $\OO_q(G/L^{\,\mathrm{s}}_S)$ is given by 
\begin{align*}
z_{i} := c^{\varpi_x}_{f_i,v_N}, & & \overline{z}_j := c^{-w_0(\varpi_x)}_{v_j,f_N} & & \text{ for } i,j = 1, \dots, N_x := \dim(V_{\varpi_x}), \, x \in \Pi\backslash S,
\end{align*}
where to lighten notation we have respectively written $\varpi_x$, and $-w_0(\varpi_x)$, as superscripts instead of $V_{\varpi_x}$, and $V_{-w_0(\varpi_x)}$.


\section{Liouville's Theorem for $\OO_q(G/L_S)$}

In this section we prove a direct $q$-deformation of Liouville's theorem for the irreducible flag manifolds. More explicitly, we show that the kernel of the anti-holomorphic differential $\adel$ of the $(0,1)$-Heckenberger--Kolb calculus contains only scalar multiples of the identity. The proof uses the tangent space formulation of a fodc and the quantum root vector presentation of the Heckenberger--Kolb calculi.

\subsection{Tangent spaces}

In this subsection we recall the tangent space formulation of covariant fodc, over quantum homogeneous spaces, as formulated in \cite{HKfodcQHS}. For sake of simplicity, we present the material for the special case of Drinfeld--Jimbo quantum groups. In particular, we consider a quantum homogeneous $\OO_q(G)$-space of invariants $B := {}^K \OO_q(G)$, for some Hopf subalgebra $K \subseteq U_q(\mathfrak{g})$. 
\begin{defn}
A finite-dimensional subspace $T \subseteq B^{\circ}$ is called a \emph{tangent space} for $B$ if 
\begin{align*}
1. ~~ \Delta(T) \subseteq T_{\e} \otimes B^{\circ}, & & 2. ~~ KT \subseteq T,
\end{align*}
where we have denoted $T_{\e} := T \oplus \mathbb{C}\e$. 
\end{defn}
For every tangent space $T$, the subspace 
\begin{align*}
I := \left\{ b \in B^+ \,|\, X(b) = 0, \mbox{for all } X \in T \right\}
\end{align*}
is a right $B$-submodule, and a right $K$-submodule, of $B^+$. Thus the quotient 
$
V := B^+/I 
$
has the structure of a right $B$-module, and a right $K$-module. 

We denote by $\Omega^1(B)$ the $K$-invariant subspace of $\OO_q(G) \otimes V$, that is to say, 
$$
\left\{ \sum_i f_i \otimes v_i \in \OO_q(G) \otimes V \,|\, \sum_i (Y \triangleright f_i)  \otimes v_i =  \sum_i  f_i \otimes (v_i  \triangleleft Y), \textrm{ \, for all } Y \in K \right\}\!.
$$
We endow $\OO_q(G) \otimes V$ with a $B$-bimodule structure by taking left multiplication on the first tensor factor, and by taking the diagonal action on the right. With respect to this choice of bimodule structure, $\Omega^1(B)$ is a $B$-sub-bimodule of $\OO_q(G) \otimes V$. Moreover, the obvious left $\OO_q(G)$-comodule structure of $\OO_q(G) \otimes V$ restricts to a left $\OO_q(G)$-coaction on $\Omega^1(B)$.

Consider next the linear map
\begin{align*}
\exd: B \to \Omega^1(B), & & b \mapsto b_{(1)} \otimes [b_{(2)}^+],
\end{align*}
where $[-]$ denotes the coset in $V$ of an element in $B$. As is readily checked, the pair $(\Omega^1(B),\exd)$ is a left $\OO_q(G)$-covariant fodc over $B$. For any choice of basis $\{X_i\}_{i=1}^n$ of $T$, an explicit formula for $\exd$ is given by 
\begin{align}\label{eqn:tangent.exd}
\exd b = \sum_{i=1}^n (X_i \triangleright b) \otimes e_i,
\end{align}
where $\{e_i\}_{i=1}^n$ is the dual basis of $V$. Moreover, as shown by Heckenberger and Kolb in \cite{HKfodcQHS}, every left $\OO_q(G)$-covariant fodc $(\Omega^1,B)$, for which $\Omega^1$ is finitely-generated as a left $B$-module, arises in this way. This gives a bijective correspondence between these two structures.

\begin{remark}
Tangent spaces over quantum homogeneous spaces can be given a more formal treatment in terms of Takeuchi's categorical equivalence. See \cite{HKfodcQHS} for more details.
\end{remark}

\subsection{Quantum Root Vectors}

Let $w_0 = w_{i_1}\cdots w_{i_k}$ be a reduced decomposition of the longest element of the Weyl group of $\mathfrak{g}$. It can be shown that the sequence
\begin{align*}
\beta_1 := \alpha_{i_1}, ~ \beta_2 := w_{i_1}(\alpha_{i_2}), ~ \dots, ~ \beta_k := w_{i_1}\cdots w_{i_k-1}(\alpha_{i_k})
\end{align*}
exhausts all the positive roots of $\mathfrak{g}$. This motivates the following definition of root vector in the noncommutative setting, as presented in detail in \cite{LusztigLeabh}, or \cite[\textsection 6.2]{KSLeabh}.

For $q \in \mathbb{C}\backslash \{0\}$, the {\em quantum integer} $[m]_q$ is the complex number 
$$
[m]_q := q^{-m+1} + q^{-m+3} + \cdots + q^{m-3} + q^{m-1}.
$$
Moreover, we have the \emph{quantum factorials}
\begin{align*}
[n]_q! = [n]_q[n-1]_q \cdots [2]_q[1]_q, & & [0]_q! = 1.
\end{align*}
To every $i=1, \ldots, l$, there corresponds an algebra automorphism $T_i$ of $U_q(\mathfrak{g})$ which acts on the generators as
\begin{flalign*}
T_i(K_j) = K_j K_i^{-a_{ij}}, ~~~~~ T_i(E_i) = -F_iK_i, ~~~~~ T_i(F_i)= -K_i^{-1}E_i, \\
T_i(E_j) = \sum_{t=0}^{-a_{ij}} (-1)^{t-a_{ij}}q_i^{-t}(E_i)^{(-a_{ij}-t)}E_j(E_i)^{(t)}, ~~~ \textrm{ for } i\neq j,~~~~\,\\
T_i(F_j) = \sum_{t=0}^{-a_{ij}} (-1)^{t-a_{ij}}q_i^{t}(F_i)^{(t)}E_j(F_i)^{(-a_{ij}-t)}, ~~~~ \textrm{ for } i\neq j, ~~~~~\,\,
\end{flalign*}
where $(E_i)^{(n)} := E_i^n /[n]_q!$, and $(F_i)^{(n)} := F_i^n / [n]_q!$. The mapping $w_i \rightarrow T_i$ determines a homomorphism of the braid group $\mathfrak{B}_{\mathfrak{g}}$ into the group of algebra automorphisms of $U_q(\mathfrak{g})$. Associated to a choice of reduced decomposition of $w_0 = w_{i_1}\cdots w_{i_k}$ of the longest element of the Weyl group, we have the elements
\begin{align*}
E_{\beta_r} : = T_{i_1}T_{i_2} \cdots T_{i_{r-1}}(E_{i_r}), & & \textrm{ for } r = 1, \dots, k.
\end{align*}
We call any such element a \emph{root vector} of $U_q(\mathfrak{g})$. Just as in the classical case, the root vectors of $U_q(\mathfrak{g})$ have an associated PBW basis \cite[\textsection 6.2.3]{KSLeabh}.

\subsection{The Heckenberger--Kolb $(0,1)$-Calculus} \label{subsection:HKTangent}

We now present the Heckenberger-Kolb calculus, or more precisely the $(0,1)$-summand of the Heckenberger--Kolb calculus (the $(1,0)$-summand will be presented in \textsection \ref{subsection:OppBW}). 

We begin by introducing some notation: let $R \subseteq \mathfrak{h}^*$ denote the root system associated to our choice of Cartan sub-algebra $\mathfrak{h}$ and let $R^+$, respectively $R^-$, be the set of positive, respectively negative, roots. Denote \begin{align*}
  R_S^{\pm}:= \mathbb{Z}S \cap R^{\pm}, & & \overline{R^{\pm}_S} := R^{\pm} \setminus R_S^{\pm}.
\end{align*}

\begin{defn} The \emph{Heckenberger--Kolb calculus} is the left $\OO_q(G)$-covariant fodc calculus associated to the tangent space 
$$
T^{(0,1)} := \mathrm{span}_{\mathbb{C}}\!\left\{E_\beta \,|\, \beta \in \overline{R^+_S} \right\} \subseteq \OO_q(G/L_S)^{\circ},
$$
where by abuse of notation, $E_\beta$ denotes the coset of the root vector  in $\OO_q(G/L_S)^{\circ}$.
\end{defn}

Here we have chosen to describe the calculus explicitly in terms of its corresponding tangent space. Alternatively the calculus can be described as one of the two left $\OO_q(G)$-covariant finite-dimensional irreducible fodc over $\OO_q(G/L_S)$, see \cite{HK}, or \cite{HVBQFM} for a presentation in the notation of the present paper. 

\subsection{Liouville's Theorem}

Since the classical homogeneous space $G/L_S$ is a compact complex manifold, it follows from Liouville's theorem that its only globally holomorphic functions are scalar multiples of the identity. In this subsection we show that this result extends to the quantum setting using the tangent space description of the $(0,1)$-Heckenberger--Kolb calculus. This result will be used in \textsection \ref{section:BorelWeil} when we prove the Borel--Weil theorem for negative line bundles $\EE_{-k}$.

\begin{thm} \label{thm:Liouville}
For any irreducible quantum flag manifold $\OO_q(G/L_S)$, with associated $(0,1)$-Heckenberger-Kolb calculus $\Omega_q^{(0,1)}(G/L_S)$, it holds that 
\begin{align*}
H^{0}_q(G/L_S) = \mathrm{ker}\left(\overline{\partial}:\OO_q(G/L_S) \rightarrow \Omega_q^{(0,1)}(G/L_S) \right) = \mathbb{C}1.
\end{align*}
\end{thm}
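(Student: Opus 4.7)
The plan is to combine the tangent space description of $\overline{\partial}$ with the $U_q(\mathfrak{l}_S)$-invariance defining $\OO_q(G/L_S)$, and then invoke the Peter--Weyl decomposition of $\OO_q(G)$ to reduce the problem to the trivial representation. Suppose $b \in \OO_q(G/L_S)$ satisfies $\overline{\partial}(b) = 0$. Applying formula \eqref{eqn:tangent.exd} to the tangent space $T^{(0,1)} = \mathrm{span}_{\mathbb{C}}\{E_\beta : \beta \in \overline{R^+_S}\}$, and using that the dual basis vectors $\{e_\beta\}$ in $V = B^+/I$ are linearly independent, the equation $\overline{\partial}(b) = 0$ is equivalent to the family of equations $E_\beta \triangleright b = 0$ for every $\beta \in \overline{R^+_S}$, where $\triangleright$ denotes the left $U_q(\mathfrak{g})$-action on $\OO_q(G)$ used in \eqref{eqn:tangent.exd}. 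In particular, each simple root vector $E_j$ with $\alpha_j \notin S$ annihilates $b$.

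Next, I would exploit the defining relation $Y \triangleright b = \epsilon(Y)b$ for every $Y \in U_q(\mathfrak{l}_S)$. Taking $Y = E_i$ for $i \in S$ shows that the remaining simple positive root vectors also annihilate $b$, while $Y = K_i$ for $i = 1, \dots, l$ forces $b$ to be of weight zero. Combined with the previous step, $b$ is a weight-zero element of $\OO_q(G)$ annihilated by every simple positive root vector $E_1, \dots, E_l$. I would then apply the Peter--Weyl decomposition $\OO_q(G) = \bigoplus_{\lambda \in \mathcal{P}^+} C(V_\lambda)$, which is a direct sum decomposition of left $U_q(\mathfrak{g})$-modules with $C(V_\lambda) \cong V_\lambda^{\oplus \dim V_\lambda}$. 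Each nonzero Peter--Weyl component $b_\lambda$ inherits the conditions on $b$ and therefore is a highest weight vector of weight $0$ in a sum of copies of $V_\lambda$. Since the highest weight vectors of $V_\lambda$ have weight exactly $\lambda$, this forces $\lambda = 0$, and hence $b \in C(V_0) = \mathbb{C}\cdot 1$.

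The main subtlety, rather than a serious obstacle, is the bookkeeping of left $U_q(\mathfrak{g})$-action conventions: one must check that the hit action appearing in \eqref{eqn:tangent.exd} is the same one under which $\OO_q(G/L_S)$ is defined as the $U_q(\mathfrak{l}_S)$-invariants, and under which the Peter--Weyl summands $C(V_\lambda)$ split as $V_\lambda^{\oplus \dim V_\lambda}$. One also needs to confirm that the quantum root vectors $E_\beta$ with $\beta \in \overline{R^+_S}$ restrict to well-defined functionals in $\OO_q(G/L_S)^\circ$ with the desired tangent space properties, a point already guaranteed by the construction of the Heckenberger--Kolb calculus recalled in \textsection \ref{subsection:HKTangent}. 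Once these compatibilities are in place, the argument is the natural quantum analogue of the classical observation that on the compact complex manifold $G/L_S$ the only globally holomorphic functions are constants.
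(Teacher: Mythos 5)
Your proposal is correct and follows essentially the same route as the paper's own proof: use the tangent space formula \eqref{eqn:tangent.exd} to deduce that $\overline{\partial}(b)=0$ forces $E_x \triangleright b = 0$ for the crossed simple root $\alpha_x \in \overline{R^+_S}$, combine this with the $U_q(\mathfrak{l}_S)$-invariance (which gives $E_s \triangleright b = 0$ for $s \in S$ and weight zero), and conclude from the Peter--Weyl decomposition that a weight-zero highest weight vector lies in $\mathbb{C}1$. The only cosmetic difference is that you invoke all tangent vectors $E_\beta$, $\beta \in \overline{R^+_S}$, whereas the paper isolates the single simple one it actually needs.
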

\begin{proof}
Since $\alpha_x$ cannot be contained in the $\mathbb{Z}$-span of $S$, we must have that $\alpha_s \in \overline{R_S^+}$. Hence we must have that $E_x \in T^{(0,1)}$. Next take an arbitrary $b \in \OO_q(G/L_S)$. It follows from the definition of the exterior derivative given in \eqref{eqn:tangent.exd}, and the fact that $E_x \in T^{(0,1)}$, that $\overline{\partial}(b)=0$ only if 
$
E_x \triangleright b = 0.
$
However, since $\OO_q(G/L_S)$ is by definition the space of invariants of $\uslevi$, we know that $E_s \triangleright b = 0$, for all $s \neq x$, and moreover, that $b$ is a weight vector of weight zero. Taken together, these two facts imply that $b$ is a highest weight vector of degree zero. However, it follows from the Peter--Weyl decomposition of $\OO_q(G)$ that the only such elements are of the form $\lambda 1$, for some $\lambda \in \mathbb{C}$.
\end{proof}

\begin{remark}
As established in \cite{MarcoConj}, the Heckenberger--Kolb calculi all 
possess noncommutative K\"ahler structures in the sense of \cite[Definition 7.1]{MMF3}. Since the Dolbeault cohomology of any differential calculus endowed with a K\"ahler structure refines its de Rham cohomology \cite[Corollary 7.7]{MMF3}, we have that the three cohomology groups $H^0_{\exd}$, $H^0_{\del}$, and $H^0_{\adel}$ all coincide. This motivates our decision to denote the cohomology in Theorem \ref{thm:Liouville} as $H^0_q(G/L_S)$, without any choice of reference to the exterior derivative involved.
\end{remark}


\section{A Quantum Principal Bundle Presentation of $\Omega^{(0,1)}_q(G/L_S)$}

In this subsection we recall Heckenberger and Kolb's construction of a left $\OO_q(G)$-covariant fodc over the Poisson quantum homogeneous space $\sslevi$. We then show that this calculus is right $\OO(U_1)$-covariant and hence gives a quantum principal bundle presentation of the anti-holomorphic Heckenberger-Kolb calculi on the irreducible quantum flag manifolds.

\subsection{An Alternative Construction of $\sslevi$} \label{subsection:AltConst}

We now recall the alternative description of $\OO_q(G/L^{\,\mathrm{s}}_S)$ given by Heckenberger and Kolb in \cite[\textsection 3.1]{HKdR}. This alternative description is needed for the construction of Heckenberger and Kolb's fodc over $\sslevi$ in the following subsection. We begin by introducing the quantum homogeneous coordinate rings.

\begin{defn} \label{defn:homoCR}
For any irreducible quantum flag manifold, the \emph{homogeneous coordinate ring} $S_q[G/L_S]$ is the subalgebra of $\OO_q(G/L_S)$ generated by the elements $z_i$. Moreover, the \emph{opposite homogeneous coordinate ring} $S_q[G/L^{\mathrm{op}}_S]$ is the subalgebra of $\OO_q(G)$ generated by the elements $\overline{z}_i$.
\end{defn}

It is well known \cite{BraveFlags} that $S_q[G/L_S]$ is a quadratic algebra. Explicitly
$$
S_q[G/L_S] \simeq \mathbb{C}\langle z_1, \dots z_{N_x}\rangle/\langle \sum_{k,l = 1}^{N_x} (R_{V_{\varpi_x},V_{\varpi_x}})^{ij}_{kl} z_k z_l - q^{(\lambda,\lambda)} z_iz_j\rangle.
$$
Similarly, the opposite homogeneous coordinate ring $S_q[G/L^{\mathrm{op}}_S]$ is a quadratic algebra, admitting an analogous $R$-matrix presentation. 
Consider the vector space 
$$
S_q[G/L_S]_{\mathbb{C}} := S_q[G/L_S] \otimes S_q[G/L^{\mathrm{op}}_S].
$$
Endow $S_q[G/L_S]_{\mathbb{C}}$ with the multiplication 
\begin{align}\label{eqn:multiplication}
(1 \otimes \overline{z}_i)(z_j \otimes 1) := q^{(\lambda,\lambda)} \sum_{k,l = 1}^{N_x} (R^{-1}_{V_{\varpi_x},V_{-w_0(\varpi_x)}})^{ij}_{kl} \, z_k \otimes \overline{z}_l.
\end{align}
It follows that 
$$
c:= \sum_{i = 1}^{N_x} (1 \otimes \overline{z}_i)(z_i \otimes 1)
$$
is a central right $U_q(\mathfrak{g})$-invariant element of $S_q[G/L_S]_{\mathbb{C}}$. 
Consider the quotient algebra 
\begin{align*} 
S_q[G/L_S]_{\mathbb{C}}^{c=1} = S_q[G/L_S]_{\mathbb{C}}/\langle c - 1 \rangle,
\end{align*}
where $\langle c- 1 \rangle$ denotes the two-sided ideal of $S_q[G/L_S]_{\mathbb{C}}$ generated by the element $c-1$. It follows from \cite[Lemma 3.1]{HKdR} that the multiplication map 
\begin{align*}
m: S_q[G/L_S]_{\mathbb{C}}^{c=1} \to \OO_q(G/L^{\mathrm{s}}_S), 
\end{align*}
is an isomorphism, giving us the claimed alternative description of $\OO_q(G/L^{\mathrm{s}}_S)$. As an immediate consequence, we get the identity
\begin{align} \label{eqn:GenDet}
\sum_{i = 1}^{N_x} \overline{z}_iz_i = 1.
\end{align}
As quadratic algebras both $S_q[G/L_S]$ and $S_q[G/L^{\mathrm{op}}_S]$ have a natural $\mathbb{Z}_{\geq 0}$-grading. This allows us to define a $\mathbb{Z}$-grading on $S_q[G/L_S]_{\mathbb{C}}$ by setting 
\begin{align} \label{eqn:gradingonSq}
\mathrm{deg}(z_i) = 1, & & \mathrm{deg}(\overline{z}_i) = -1.
\end{align}
Since $c$ is of degree zero, the grading descends to a $\mathbb{Z}$-grading on $S_q[G/L_S]_{\mathbb{C}}^{c=1}$, giving us a $\mathbb{Z}$-grading on $\OO_q(G/L^{\mathrm{s}}_S)$.

\subsection{The Heckenberger--Kolb First-Order Differential Calculus on $\OO_q(G/L^{\mathrm{s}}_S)$}

Define $\Gamma$ to be the left $S_q[G/L_S]$-module generated by the elements $\adel z_i$, for $i = 1, \dots, N_x$, subject to the relations
$$
\sum_{k,l = 1}^{N_x} \left[R^2_{V_{\varpi_x},V_{\varpi_x}} + q^{(\varpi_x,\varpi_x)}(q^{(\alpha_x,\alpha_x)}-1)R_{V_{\varpi_x},V_{\varpi_x}} + q^{2(\varpi_x,\varpi_x) - (\alpha_x,\alpha_x)}\id\right]^{kl}_{ij} z_i \adel z_j = 0,
$$
for all $i,j = 1, \dots, N_x$. As shown in \cite[\textsection 3.2.3]{HKdR}, we can endow $\Gamma$ with a left $S_q[G/L_S]_{\mathbb{C}}$-module structure such that 
a derivation 
$
\adel:S_q[G/L_S]_{\mathbb{C}} \to S_q[G/L_S] \otimes \Gamma,
$
is uniquely determined by 
$$
\adel:f \otimes \overline{z}_i \mapsto f \otimes \adel \overline{z}_i. 
$$
This gives us a left $\OO_q(G)$-covariant fodc over $S_q[G/L_S]_{\mathbb{C}}$.

Consider the sub-bimodule of $S_q[G/L_S] \otimes \Gamma$ generated by the form $\adel c$ and the subsets 
\begin{align*}
  (c-1)(S_q[G/L_S] \otimes \Gamma), & & (S_q[G/L_S] \otimes \Gamma)(c-1).
\end{align*}
Quotienting $S_q[G/L_S] \otimes \Gamma$ by this sub-bimodule, we get a left $\OO_q(G)$-covariant fodc over $\OO_q(G/L^{\,\mathrm{s}}_S)$, which we denote by
$$
\left(\,\Omega^{(0,1)}_q(G/L^{\,\mathrm{s}}_S), \, \adel\,\right)\!.
$$
We see that by construction $\adel f = 0$, for all $f \in S_q[G/L_S]$.


\subsection{Principal Pairs and Quantum Principal Bundles}

We now recall the notion of a principal pair, introduced in \cite{KMOS} as a framework in which to construct quantum principal bundles. We use the equivalent, yet simpler, formulation introduced in \mbox{\cite[Definition 3.8]{GAPP}}.

\begin{defn}
For a Hopf algebra $A$, a \emph{principal $A$-pair} is a pair of quantum homogeneous $A$-spaces $B \subseteq P$, such that  
$
\pi_B(P) = \pi_B(A)^{\co(\pi_P(A))}
$
is a Hopf subalgebra of $\pi_B(A)$, with respect to the coaction 
\begin{align*}
    \pi_B(P) \to \pi_B(P) \otimes \pi_P(A), & & \pi_B(p) \mapsto \pi_B(p_{(1)}) \otimes \pi_P(p_{(2)}).
\end{align*}
\end{defn}

For principal pairs, the quantum principal bundle condition reduces to a much simpler covariance requirement on the total space calculus, as we now recall.

\begin{prop} \label{prop:PPQPB}
For a principal pair $B \subseteq P$, let $\Omega^1(P)$ be a left $A$-covariant, and right $\pi_B(A)$-covariant, fodc over $P$. It holds that the pair
$
(P,\Omega^1(P))
$
is a quantum principal bundle.
\end{prop}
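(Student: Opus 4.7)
The plan is to verify the two conditions of Definition~\ref{qpb} in turn, taking $H := \pi_B(P)$ as the structure Hopf algebra of the bundle.

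First, I would establish that $P$ is a Hopf--Galois $H$-extension of $B$. The key input is the principal-pair identity $H = \pi_B(A)^{\co(\pi_P(A))}$, which forces the $\pi_B(A)$-coaction on $A$ to restrict along the inclusion $P \hookrightarrow A$ to a coaction $\Delta_R : P \to P \otimes H$: indeed, for $p \in P$ one has $(\id \otimes \pi_B)\Delta(p) = p_{(1)} \otimes \pi_B(p_{(2)})$, and a short computation using coassociativity together with $p \in P = A^{\co(\pi_P(A))}$ shows that $\pi_B(p_{(2)})$ is $\pi_P(A)$-coinvariant and hence lies in $H$. The inclusion $B \subseteq P^{\co(H)}$ is automatic, while the reverse uses faithful flatness of $A$ over $B$ together with $B = A^{\co(\pi_B(A))}$. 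Faithful flatness of $P$ over $B$ bootstraps from the faithful flatness of the two quantum homogeneous spaces $B \subseteq A$ and $P \subseteq A$, and the Hopf--Galois property then follows from the standard argument for faithfully flat coideal subalgebras.

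Second, I would establish the sub-bimodule condition. Let $N \subseteq \Omega^1_u(P)$ be the $P$-sub-bimodule corresponding to $\Omega^1(P)$. Right $\pi_B(A)$-covariance of the calculus means that $N$ is a right $\pi_B(A)$-subcomodule of $\Omega^1_u(P)$, and therefore $\mathrm{ver}(N) \subseteq P \otimes \pi_B(A)^+$ is a $P$-sub-bimodule right $\pi_B(A)$-subcomodule. Because the coaction on $P$ descends to $H$, the coaction on $\mathrm{ver}(N)$ in fact factors through $P \otimes H^+$. The Hopf--Galois isomorphism $\mathrm{can} : P \otimes_B P \xrightarrow{\,\sim\,} P \otimes H$ from the first part identifies $P$-sub-bimodules of $\ker(\mathrm{can})$ with right ideals of $H^+$, and applied to $\mathrm{ver}(N)$ this yields the required decomposition $\mathrm{ver}(N) = P \otimes I$ for a right ideal $I \subseteq H^+$.

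The remaining, and technically most delicate, point is the $\mathrm{Ad}$-invariance of $I$. Here I would exploit the left $A$-covariance of $\Omega^1(P)$: it makes $N$ a left $A$-subcomodule of $\Omega^1_u(P)$ and, transferring through $\mathrm{ver}$, turns $\mathrm{ver}(N) \cong P \otimes I$ into a left $A$-subcomodule of $P \otimes H^+$. Projecting through $\pi_B$ on the first tensor factor of this left coaction, and combining the result with the already-established right $H$-coaction via the antipode, I would recover the adjoint coaction $\mathrm{Ad}(h) = h_{(2)} \otimes S(h_{(1)})h_{(3)}$ acting on $I$. The output is the inclusion $\mathrm{Ad}(I) \subseteq I \otimes H$, which is the missing piece. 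This last computation, which threads the principal-pair structure through the left--right covariance axioms of the calculus and the counit identities for the Hopf algebra $H$, is the main obstacle and accounts for the bulk of the technical work.
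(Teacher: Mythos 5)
The paper does not actually prove this proposition: it is recalled from \cite{GAPP} (see also \cite{KMOS}), so there is no in-text argument to compare against. Your overall skeleton --- verify the two conditions of Definition~\ref{qpb} with structure Hopf algebra $H:=\pi_B(P)$ --- is the right one, and your first step (Hopf--Galois) is correct in outline, although the well-definedness of the restricted coaction $P\to P\otimes\pi_B(P)$ and the bijectivity of $\mathrm{can}$ are themselves the substance of the principal-pair theorem of \cite{GAPP} rather than ``the standard argument''.

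The genuine gap is in your treatment of condition 2. You deduce the freeness $\mathrm{ver}(N)=P\otimes I$ from the fact that $\mathrm{ver}(N)$ is a $P$-sub-bimodule and right $H$-subcomodule of $P\otimes H^+$, via an asserted correspondence between such subobjects and right ideals of $H^+$. No such correspondence holds: a right $H$-covariant $P$-sub-bimodule of $P\otimes H^+$ need not be free over $P$ (classically, an $H$-equivariant subsheaf of the trivial bundle $P\times V$ over the \emph{total space} corresponds to an arbitrary subsheaf of the associated bundle over the base, not to a subrepresentation of $V$). Note also that $\ker(\mathrm{can})=0$, since $\mathrm{can}$ is bijective; what you presumably mean is the identification of $\Omega^1_u(P)/P\Omega^1_u(B)P$ with $P\otimes H^+$. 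The roles of the two covariance hypotheses are in fact the reverse of what you propose, in exact analogy with Woronowicz's classification of bicovariant calculi: it is the \emph{left} $A$-covariance, combined with faithful flatness of the quantum homogeneous space $P$ via Takeuchi's equivalence between ${}^{A}_{P}\mathrm{Mod}$ and the comodules of $A/P^+A$, that forces $\mathrm{ver}(N)=P\otimes I$. Indeed, transported through $\mathrm{ver}$, the left $A$-coaction on $P\otimes H^+$ lives entirely on the first tensor factor, so $P\otimes H^+$ corresponds under Takeuchi's functor to the trivial comodule $H^+$, whose subobjects are arbitrary subspaces $I$, each pulling back to $P\otimes I$; the right $P$-module structure then makes $I$ a right ideal, and the right $H$-covariance (not the left $A$-covariance) makes $I$ an $\mathrm{Ad}$-subcomodule, since $\mathrm{ver}$ intertwines $\Delta_R$ with the tensor product of the coaction on $P$ and $\mathrm{Ad}$ on $H^+$. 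As written, your argument would yield the quantum principal bundle property for any merely right-covariant calculus, which would make the left $A$-covariance hypothesis of the proposition redundant --- a sign that the key mechanism has been misplaced.
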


We will use this result in the following subsection to produce a quantum principal bundle presentation of the Heckenberger--Kolb calculi.

\subsection{A Quantum Principal Bundle} 

The motivating example of a principal pair was the Poisson circle bundle over the quantum Grassmannians. In \cite{GAPP} this was extended to include all Poisson torus bundles over quantum flag manifolds, as we now recall. 

\begin{thm} For any Drinfeld--Jimbo quantum group $U_q(\mathfrak{g})$, and any subset $S \subseteq \Pi$ of simple roots of $\mathfrak{g}$, the pair
$$
\left(\OO_q(G/L_S), \, \OO_q(G/L^{\,\mathrm{s}}_S)\right)
$$
is a principal pair of quantum homogeneous $\OO_q(G)$-spaces.
\end{thm}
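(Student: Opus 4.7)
The statement unfolds as two verifications. Firstly, both $B := \OO_q(G/L_S)$ and $P := \OO_q(G/L^{\,\mathrm{s}}_S)$ must be quantum homogeneous $\OO_q(G)$-spaces, which was already settled in \textsection 3 via cosemisimplicity of the Hopf duals of $\uslevi$ and $\usslevi$. Secondly, the essential principal-pair requirement is that $\pi_B(P)$ is a Hopf subalgebra of $\pi_B(\OO_q(G))$, equal to the coinvariants of the induced $\pi_P(\OO_q(G))$-coaction. The whole argument lies in this second point.

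First I would identify $\pi_B(\OO_q(G)) \cong \slevi$ and $\pi_P(\OO_q(G)) \cong \sslevi$ as quotient Hopf algebras of $\OO_q(G)$, via the Takeuchi equivalence between coideal subalgebras of a Hopf algebra and its quotient coalgebras in the cosemisimple setting. Dually, the chain of Hopf subalgebras $\usslevi \subseteq \uslevi \subseteq U_q(\mathfrak{g})$ yields a chain of Hopf algebra surjections $\OO_q(G) \twoheadrightarrow \slevi \twoheadrightarrow \sslevi$, whose kernels coincide with $B^+\OO_q(G)$ and $P^+\OO_q(G)$ respectively. Since $B \subseteq P$ forces $B^+\OO_q(G) \subseteq P^+\OO_q(G)$, the second surjection $q:\slevi \twoheadrightarrow \sslevi$ is precisely the Hopf algebra map factoring $\pi_P$ through $\pi_B$.

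One inclusion, $\pi_B(P) \subseteq \pi_B(\OO_q(G))^{\co(\pi_P(\OO_q(G)))}$, then follows by a direct calculation. For $p \in P$, the induced coaction evaluated at $\pi_B(p)$ gives
\begin{align*}
\Delta_\pi(\pi_B(p)) = \pi_B(p_{(1)}) \otimes q(\pi_B(p_{(2)})) = \pi_B(p_{(1)}) \otimes \pi_P(p_{(2)}),
\end{align*}
and since $P$ is a left coideal subalgebra of $\OO_q(G)$ one has $p_{(2)} \in P$, so $\pi_P(p_{(2)}) = \e(p_{(2)}) 1$. The expression collapses to $\pi_B(p) \otimes 1$, yielding the desired inclusion.

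The reverse inclusion, together with the Hopf subalgebra property, is the main obstacle, and this is where the particular structure of the Drinfeld--Jimbo setting is essential. My plan would be a weight-space analysis: classically $L_S/L^{\,\mathrm{s}}_S$ is an abelian torus of rank $|\Pi \setminus S|$, and in the quantum setting the kernel of $q:\slevi \twoheadrightarrow \sslevi$ is generated by the Cartan characters indexed by $\Pi \setminus S$. A coinvariance check then reduces to matching each coinvariant weight component of $\slevi$ to a matrix coefficient already lying in $\pi_B(P)$, which follows using the $(U_q(\mathfrak{g}),\uslevi)$-bicomodule structure of $\OO_q(G)$ together with cosemisimplicity of $\usslevi$. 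Once the equality $\pi_B(P) = \pi_B(\OO_q(G))^{\co(\pi_P(\OO_q(G)))}$ is established, the Hopf subalgebra property is automatic: the exact sequence $\pi_B(P) \hookrightarrow \slevi \twoheadrightarrow \sslevi$ of Hopf algebras identifies $\pi_B(P)$ as a sub-Hopf algebra by standard Hopf-kernel arguments, and this completes the verification that the pair is principal.
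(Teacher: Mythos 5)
A preliminary remark: the paper itself does not prove this theorem --- it is recalled verbatim from \cite{GAPP} --- so there is no internal proof to measure your argument against. Judged on its own terms, your proposal sets the problem up correctly: the identifications $\pi_B(\OO_q(G)) \cong \OO_q(L_S)$ and $\pi_P(\OO_q(G)) \cong \OO_q(L^{\,\mathrm{s}}_S)$, the factoring surjection $q$, and the verification of the inclusion $\pi_B(P) \subseteq \pi_B(\OO_q(G))^{\co(\pi_P(\OO_q(G)))}$ are all fine (the last uses only that $P$ is a coideal subalgebra, so that $\pi_P$ annihilates $P^+$). The difficulty is that the two claims carrying the actual content of the theorem are asserted rather than established.

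First, the reverse inclusion $\pi_B(\OO_q(G))^{\co(\pi_P(\OO_q(G)))} \subseteq \pi_B(P)$ is left as a ``plan'': you never exhibit, for a given coinvariant element of $\OO_q(L_S)$, a preimage in $P$. The concrete mechanism that makes this work, and around which any such weight-space argument must be built, is the computation of $\pi_B$ on the generators of $P = \OO_q(G/L^{\,\mathrm{s}}_S)$: since $v_{N_x}$ spans a one-dimensional $U_q(\mathfrak{l}_S)$-submodule of $V_{\varpi_x}$, one finds $\pi_B(z_i) = \delta_{i,N_x}\, g_x$ and $\pi_B(\overline{z}_i) = \delta_{i,N_x}\, g_x^{-1}$ for a grouplike element $g_x \in \OO_q(L_S)$, so that $\pi_B(P)$ is the Laurent polynomial algebra on the grouplikes $g_x$, for $x \in \Pi\setminus S$; this must then be matched against the decomposition of $\OO_q(L_S)$ over $\OO_q(L^{\,\mathrm{s}}_S)$. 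Second, and more seriously, the claim that the Hopf subalgebra property is ``automatic by standard Hopf-kernel arguments'' is false: the right coinvariants of a surjection of Hopf algebras form a left coideal subalgebra, but in general they are neither a right coideal nor stable under the antipode. The surjection $\OO_q(SL_2) \twoheadrightarrow \mathbb{C}[t,t^{-1}]$ is the standard counterexample --- its coinvariant subalgebra is the Podle\'s sphere, which is not a Hopf subalgebra of $\OO_q(SL_2)$. In the present situation the property holds precisely because $\pi_B(P)$ is generated by grouplike elements and their inverses, which is the same computation flagged above; without it, your argument does not close.
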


This means that we can now use Proposition \ref{prop:PPQPB} to produce a quantum principal bundle from the Heckenberger--Kolb calculi. We first establish right $\OO(U_1)$-covariance of the calculus over $\sslevi$, and then conclude the quantum principle structure.

\begin{lem}
The fodc $\Omega^{(0,1)}_q(G/L^{\,\mathrm{s}}_S)$ is right $\OO(U_1)$-covariant.
\end{lem}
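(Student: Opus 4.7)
The plan is to interpret right $\OO(U_1)$-covariance of the fodc as the existence of a compatible $\mathbb{Z}$-grading. Under the identification $\OO_q(G/L^{\,\mathrm{s}}_S) \simeq S_q[G/L_S]_{\mathbb{C}}^{c=1}$ recalled in \textsection \ref{subsection:AltConst}, the right coaction on $\OO_q(G/L^{\,\mathrm{s}}_S)$ induced by the principal pair structure corresponds precisely to the $\mathbb{Z}$-grading of \eqref{eqn:gradingonSq}, namely $\deg(z_i) = 1$ and $\deg(\overline{z}_i) = -1$; a right coaction of $\OO(U_1) = \mathbb{C}[t,t^{-1}]$ on any vector space is equivalent to a $\mathbb{Z}$-grading, with covariance of a bimodule fodc amounting to the grading being compatible with the bimodule structure and the derivation. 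It therefore suffices to show that this grading lifts to a $\mathbb{Z}$-grading on $\Omega^{(0,1)}_q(G/L^{\,\mathrm{s}}_S)$ for which $\adel$ is degree-preserving.

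First I would extend the grading to $S_q[G/L_S] \otimes \Gamma$ by declaring $\deg(\adel z_i) = 1$ and $\deg(\adel \overline{z}_i) = -1$, which makes $\adel: f \otimes \overline{z}_i \mapsto f \otimes \adel \overline{z}_i$ degree-preserving on generators. Homogeneity of all defining relations must then be verified: the quadratic relations presenting $\Gamma$ as a left $S_q[G/L_S]$-module are sums of terms of the shape $z_i \adel z_j$, each of degree $2$, and hence are homogeneous; the relations governing the extension to a left $S_q[G/L_S]_{\mathbb{C}}$-module structure, as constructed in \cite[\textsection 3.2.3]{HKdR}, are $R$-matrix commutation rules for moving $\overline{z}_i$ past $\adel z_j$ entirely analogous to \eqref{eqn:multiplication}, and these preserve total degree as each factor carries a definite degree.

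Second, I would pass to the quotient defining $\Omega^{(0,1)}_q(G/L^{\,\mathrm{s}}_S)$. Since $c$ is a sum of terms of degree $(-1)+1 = 0$, the element $c-1$ is homogeneous of degree $0$; applying $\adel$ to $c$ yields a degree-$0$ element of $S_q[G/L_S] \otimes \Gamma$. Consequently the sub-bimodule generated by $\adel c$, $(c-1)(S_q[G/L_S] \otimes \Gamma)$, and $(S_q[G/L_S] \otimes \Gamma)(c-1)$ is a graded subspace, so the quotient $\Omega^{(0,1)}_q(G/L^{\,\mathrm{s}}_S)$ inherits a $\mathbb{Z}$-grading in which $\adel$ remains degree-preserving.

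The associated right $\OO(U_1)$-coaction on $\Omega^{(0,1)}_q(G/L^{\,\mathrm{s}}_S)$ is then a bimodule map by homogeneity of the bimodule structure, and intertwines with $\adel$ by degree preservation, giving right $\OO(U_1)$-covariance in the required sense. The principal technical step, and the main potential obstacle, is verifying homogeneity of the $S_q[G/L_S]_{\mathbb{C}}$-module relations on $\Gamma$, since these arise from $R$-matrix braidings that must be unpacked explicitly; however, they are designed to mirror the multiplication rule \eqref{eqn:multiplication}, for which homogeneity is already manifest.
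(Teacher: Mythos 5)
Your proposal is correct and follows essentially the same route as the paper: identify the right $\OO(U_1)$-coaction with a $\mathbb{Z}$-grading, check that the defining relations of $\Gamma$ (and its $S_q[G/L_S]_{\mathbb{C}}$-module structure) are homogeneous, observe that $c-1$ and $\adel c$ are homogeneous of degree zero so the sub-bimodule they generate is graded, and conclude that the grading descends to the quotient with $\adel$ degree-preserving. Your explicit flagging of the $R$-matrix commutation relations as the point requiring verification is a reasonable elaboration of what the paper leaves implicit, but it is not a different argument.
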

\begin{proof}
Consider the $\mathbb{Z}$-grading on $\Gamma$ defined by 
\begin{align*}
\deg(z_i) = \deg(\adel z_i) = 1.
\end{align*}
Since the defining relations of $\Gamma$ are of degree $2$ with respect to this grading, it must descend to a well-defined $\mathbb{Z}$-grading on $\Gamma$. Moreover, since $\adel$ is clearly a degree zero map, we have that the fodc is right $\OO(U_1)$-covariant.

With respect to the grading on $S_q[G/L^{\mathrm{op}}_S]$, inherited from the grading of $\sslevi$, we take the tensor product grading on 
\begin{align}\label{eqn:gammaC}
S[G/L^{\mathrm{op}}_S] \otimes \Gamma.
\end{align}
With respect to this grading, the differential $\adel$ is again a degree zero map, meaning that we have a right $\OO(U_1)$-covariant fodc.

Finally, we note that the elements $c-1$ and $\overline{\partial} c$ are homogeneous and of degree zero. This means that the sub-bimodule they generate is homogeneous, and so, the $\mathbb{Z}$-grading of \eqref{eqn:gammaC} descends to a $\mathbb{Z}$-grading on $\Omega^{(0,1)}_q(G/L^{\,\mathrm{s}}_S)$, giving us a right $\OO(U_1)$-covariant fodc.
\end{proof}

This result, taken together with Proposition \ref{prop:PPQPB} implies the following proposition.

\begin{prop}
The pair 
$$
\left(\Omega^{(0,1)}_q(G/L_S^{\,\mathrm{s}}),\Delta_{R, \, \OO(U_1)}\right)\!
$$
is a quantum principal bundle.
\end{prop}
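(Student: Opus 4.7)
The plan is to invoke Proposition \ref{prop:PPQPB} directly: all required ingredients have been assembled in the preceding material. The principal pair hypothesis holds for $B := \OO_q(G/L_S) \sseq P := \OO_q(G/L^{\,\mathrm{s}}_S)$ by the theorem stated just above, while left $\OO_q(G)$-covariance of $\Omega^{(0,1)}_q(G/L^{\,\mathrm{s}}_S)$ is built into Heckenberger and Kolb's construction recalled in the previous subsection, since both the defining relations and the derivation $\adel$ are manifestly left $\OO_q(G)$-equivariant.

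The only step with any real content is to verify that the right $\OO(U_1)$-coaction produced in the preceding lemma coincides with the canonical right coaction by $\pi_B(P)$ furnished by the principal pair structure. To do this I would use the alternative description in \textsection \ref{subsection:AltConst}. The $\bZ$-grading on $P$ defined by $\deg(z_i) = 1$ and $\deg(\overline{z}_i) = -1$ (well-defined since the central element $c - 1$ is homogeneous of degree zero) determines a right $\OO(U_1)$-coaction sending a homogeneous element of degree $n$ to $p \otimes t^n$, with $t$ a generator of $\OO(U_1)$. The degree-zero component of $P$ is precisely $B$, and irreducibility (which forces $|\Pi \setminus S| = 1$) is what ensures the resulting quotient Hopf algebra is a single copy of $\OO(U_1)$ rather than a torus of higher rank. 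This identifies $\pi_B(P) \cong \OO(U_1)$ and matches the two coactions.

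With these identifications the hypotheses of Proposition \ref{prop:PPQPB} are satisfied, and the conclusion that $\left(\Omega^{(0,1)}_q(G/L_S^{\,\mathrm{s}}),\Delta_{R, \, \OO(U_1)}\right)$ is a quantum principal bundle is immediate. The main obstacle, such as it is, lies entirely in the identification $\pi_B(P) \cong \OO(U_1)$ via the grading; every other step is essentially bookkeeping supplied by the machinery set up earlier in the paper. I would expect no further subtleties, since the heavy lifting has already been done by the principal pair theorem and by the preceding lemma establishing right $\OO(U_1)$-covariance of the calculus.
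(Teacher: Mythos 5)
Your proposal is correct and follows the same route as the paper, which simply combines the preceding lemma on right $\OO(U_1)$-covariance with the principal pair theorem and Proposition \ref{prop:PPQPB}. The extra care you take in identifying $\pi_B(P)\cong\OO(U_1)$ via the $\mathbb{Z}$-grading is a detail the paper leaves implicit, but it is not a different argument.
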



\subsection{A Left $\OO_q(G)$-Covariant Principal Connection}

In this subsection we construct a strong principal connection for our bundle. We begin with some results about the horizontal forms of the bundle.

\begin{lem}
For the quantum principal bundle $(\Omega^{(0,1)}_q(G/L_S^{\mathrm{s}}),\Delta_R)$, it holds that 
\begin{enumerate}
  \item $\Omega^{(0,1)}_q(G/L^{\mathrm{s}}_S)_{\mathrm{hor}} = \Omega^{(0,1)}_q(G/L_S)\qphs$,
  \item $\Omega^{(0,1)}_q(G/L^{\mathrm{s}}_S)_{\mathrm{ver}} = 0$.
\end{enumerate}
\end{lem}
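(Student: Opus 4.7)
The plan is to prove the stronger statement that $\Omega^{(0,1)}_q(G/L^{\mathrm{s}}_S)$ is \emph{itself} horizontal, from which both claims will follow. Statement (2) is then immediate: the condition $\ker(\Pi)=\Omega^{(0,1)}_q(G/L^{\mathrm{s}}_S)$ on a projection $\Pi$ forces $\Pi=0$, so $\Omega^{(0,1)}_q(G/L^{\mathrm{s}}_S)_{\mathrm{ver}}=\mathrm{im}(\Pi)=0$. Statement (1) then reduces to identifying the horizontal subspace with the one-sided product $\Omega^{(0,1)}_q(G/L_S)\cdot\sslevi$.

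The central computation uses only the Leibniz rule, the vanishing $\adel z_i=0$ from Section 5.2, and the identity $\sum_i\overline{z}_iz_i=1$ of \eqref{eqn:GenDet}. For every $i,j$ the product $z_i\overline{z}_j$ is of $\OO(U_1)$-degree zero and hence lies in $\OO_q(G/L_S)$, so Leibniz gives
$$z_i\adel\overline{z}_j=\adel(z_i\overline{z}_j)\in\adel\,\OO_q(G/L_S)\sseq\Omega^{(0,1)}_q(G/L_S).$$
Multiplying by $\overline{z}_i$ on the left, summing, and invoking $\sum_i\overline{z}_iz_i=1$ yields
$$\adel\overline{z}_j=\sum_i\overline{z}_i\cdot\adel(z_i\overline{z}_j)\in\sslevi\cdot\Omega^{(0,1)}_q(G/L_S).$$
Since $\{\adel\overline{z}_j\}$ generate $\Omega^{(0,1)}_q(G/L^{\mathrm{s}}_S)$ as a left $\sslevi$-module, this forces $\Omega^{(0,1)}_q(G/L^{\mathrm{s}}_S)=\sslevi\cdot\Omega^{(0,1)}_q(G/L_S)\sseq\Omega^{(0,1)}_q(G/L^{\mathrm{s}}_S)_{\mathrm{hor}}$, the reverse inclusion being tautological.

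To upgrade this to the one-sided equality in (1), I would run the mirror computation. The key additional input is that $\sum_iz_i\overline{z}_i$ is a nonzero scalar multiple of $1$ in $\OO_q(G)$, which can be extracted by applying the commutation \eqref{eqn:multiplication} to $\sum_i\overline{z}_iz_i=1$ together with a standard partial-trace identity for the $R$-matrix $R_{V_{\varpi_x},V_{-w_0(\varpi_x)}}$; alternatively, one may observe directly that this element is $U_q(\mathfrak{g})$-invariant and appeal to the one-dimensionality of invariants in the Peter--Weyl decomposition \eqref{eqn:PeterWeyl}. Writing $\sum_iz_i\overline{z}_i=\alpha\cdot 1$ with $\alpha\in\bC^{*}$, the mirror calculation gives
$$\adel\overline{z}_j=\alpha^{-1}\sum_i\adel(\overline{z}_jz_i)\cdot\overline{z}_i\in\Omega^{(0,1)}_q(G/L_S)\cdot\sslevi,$$
and a straightforward induction on the $\OO(U_1)$-degree of $f\in\sslevi$ (repeatedly inserting $\sum_mz_m\overline{z}_m$ on the right to push the argument of $\adel$ into $\OO_q(G/L_S)$) promotes this to $\adel f\in\Omega^{(0,1)}_q(G/L_S)\cdot\sslevi$ for every $f$, hence to all of $\Omega^{(0,1)}_q(G/L^{\mathrm{s}}_S)$. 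The main obstacle is the non-vanishing of $\alpha$; everything else is bookkeeping with Leibniz and \eqref{eqn:GenDet}.
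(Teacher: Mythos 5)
Your strategy is essentially the paper's: both arguments run entirely on the Leibniz rule, the vanishing $\adel z_i=0$, and a ``partition of unity'' extracted from \eqref{eqn:multiplication} and \eqref{eqn:GenDet}. Your opening computation, $\adel\overline{z}_j=\sum_i\overline{z}_i\,\adel(z_i\overline{z}_j)$ with $z_i\overline{z}_j\in\EE_0=\OO_q(G/L_S)$, correctly shows that every generator $\adel\overline{z}_j$ --- and hence, by bimodule generation (left-module generation is more than you need, and more than is immediate from the construction of $\Gamma$), the whole calculus --- is horizontal. This gives (2) and is, if anything, a cleaner route to horizontality than the paper's.

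The genuine gap is the key input to your mirror computation: $\sum_i z_i\overline{z}_i$ is \emph{not} a scalar multiple of $1$. Already for $\OO_q(\mathbb{CP}^1)\subset\OO_q(SU_2)$, where $\sum_i\overline{z}_iz_i$ is the quantum determinant $da-q^{-1}bc=1$, the opposite-order sum is $ad-q^{-1}bc=1+(q-q^{-1})bc\notin\mathbb{C}1$. Accordingly, both of your proposed justifications fail: the partial trace $\sum_i(R^{-1})^{ii}_{kl}$ is a \emph{non-constant} diagonal matrix (a quantum trace, weighted by powers of $q$ depending on the weight of $v_k$), and the element $\sum_i f_i\otimes v_i$ carried by the moving legs of $\sum_iz_i\overline{z}_i$ is the coevaluation for the wrong-sided dual and is therefore not $U_q(\mathfrak{g})$-invariant --- it is $\sum_i\overline{z}_iz_i$ that the paper singles out as the invariant element $c$. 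The repair is easy and is exactly the device the paper uses: rewrite $1=\sum_i\overline{z}_iz_i$ via \eqref{eqn:multiplication} as $\sum_k f_kv_k$ with $f_k\in S_q[G/L_S]$ of degree $+1$ (so $\adel f_k=0$) and $v_k\in S_q[G/L^{\mathrm{op}}_S]$ of degree $-1$. Then $\adel\overline{z}_j=\sum_k\adel(\overline{z}_j)f_kv_k=\sum_k\adel(\overline{z}_jf_k)v_k$, and since $\overline{z}_jf_k$ has degree $0$ this lies in $\Omega^{(0,1)}_q(G/L_S)\,\OO_q(G/L^{\,\mathrm{s}}_S)$ as required. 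With that substitution (and the analogous insertion of $\sum_i\overline{z}_iz_i$ when pushing elements of positive degree past a form) your induction on the $\OO(U_1)$-degree goes through and yields (1).
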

\begin{proof}
1. ~An arbitrary form in $\Omega^{(0,1)}_q(G/L_S)$ is a sum of elements of the form $b\,\adel b'$, for $b,b' \in \OO_q(G/L_S)$. Consider now the form 
\begin{align*}
z_i b\,\adel b' \in \qphs \Omega^{(0,1)}_q(G/L_S), & & \textrm{ for any } i = 1, \dots, N_x. 
\end{align*}
It follows from \eqref{eqn:multiplication} and \eqref{eqn:GenDet} that there exist elements $f_j\in S_q[G/L_S]$ and $v_j \in S_q[G/L^{\mathrm{op}}_S]$ such that $\sum_j f_jv_j=1.$ Then it follows from the Leibniz rule that 
\begin{align*}
z_i b\adel b' = \adel(z_ibb') - \adel(z_ib)b' = & \, \sum_j \adel(z_ibb')f_jv_j - \sum_j \adel(z_i b)f_jv_jb'.
\end{align*}
Moreover, since $\adel f_j = 0$, we have that 
\begin{align*}
 \sum_j \adel(z_ibb')f_jv_j - \sum_j \adel(z_i b)f_jv_jb' = \sum_j \adel(z_ibb'f_j)v_j - \sum_j \adel(z_ibf_j)v_jb'.
\end{align*}
Thus we see that $z_i b\adel b'$ is an element of $\Omega^{(0,1)}_q(G/L_S)\qphs$, for any $i = 1, \dots, N_x$. An analogous argument establishes that 
$$
\overline{z}_i b\adel b' \in \Omega^{(0,1)}_q(G/L_S)\qphs.
$$
%
Thus we can conclude that 
\begin{align*}
\qphs \Omega^{(0,1)}_q(G/L_S) \subseteq \Omega^{(0,1)}_q(G/L_S)\qphs.
\end{align*}

2. ~Consider now a general element $\omega \in \Omega^{(0,1)}_q(G/L_S^{\,\mathrm{s}})$. It follows from the Leibniz rule, and the fact that $\adel z_i = 0$, for all $i =1, \dots, N_x$, that $\omega$ is a sum of the form 
\begin{align*}
\sum_{i} p_i (\adel \overline{z}_i)p'_i, & & \textrm{ for } p_i,p'_i \in \OO_q(G/L_S^{\,\mathrm{s}}).
\end{align*}
Using the same argument as above, we see that
\begin{align*}
p_i (\adel \overline{z}_i)p'_i = & \, \sum_j p_i (\adel \overline{z}_i)f_jv_jp_i'
= \sum_j p_i \adel(\overline{z}_if_j)v_jp_i'.
\end{align*}
It now follows from the presentation of the horizontal forms given in 1. that  
\begin{align*}
\Omega^{(0,1)}_q(G/L_S^{\,\mathrm{s}}) = \Omega^{(0,1)}_q(G/L_S) \qphs.
\end{align*}
Thus we see that $\Omega^{(0,1)}_q(G/L^{\mathrm{s}}_S)_{\mathrm{ver}} = 0$.
\end{proof}

From this lemma, and the definition of a strong principal connection, we get the following proposition.

\begin{prop}
The zero map on $\Omega^{(0,1)}_q(G/L^{\mathrm{s}}_S)_{\mathrm{hor}}$ is a left $\OO_q(G)$-covariant strong principal connection.
\end{prop}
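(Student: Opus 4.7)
My plan is to verify each clause of the definition of a left $\OO_q(G)$-covariant strong principal connection for the candidate $\Pi := 0$, using the preceding lemma as the sole nontrivial input. Most of the work is conceptual: once one recognises that every $(0,1)$-form on $\qphs$ is horizontal, the zero map is clearly the connection that projects onto a trivial vertical complement.

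First I would record the purely algebraic checks. The zero endomorphism is idempotent, left $\qphs$-linear, and right $\OO(U_1)$-colinear; it is also a left $\OO_q(G)$-comodule map, because any map whose image vanishes intertwines any two coactions trivially. These observations dispose of the covariance and projection requirements in one stroke.

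Next I would verify the kernel condition $\ker(\Pi) = \Omega^{(0,1)}_q(G/L^{\,\mathrm{s}}_S)_{\mathrm{hor}}$. Since $\ker(0)$ is the entire one-form space, what is actually needed is the equality $\Omega^{(0,1)}_q(G/L^{\,\mathrm{s}}_S) = \Omega^{(0,1)}_q(G/L^{\,\mathrm{s}}_S)_{\mathrm{hor}}$, which is exactly part~1 of the preceding lemma (equivalently, the vanishing in part~2 identifies the quotient $P \otimes \Lambda^1(H)$ in the exact sequence \eqref{Eqn:qpbexactseq} as trivial). For the strong condition, I must check that $(\id - \Pi)(\adel p) \in \Omega^{(0,1)}_q(G/L_S)\cdot \qphs$ for every $p \in \qphs$. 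With $\Pi = 0$ this reduces to $\adel p \in \Omega^{(0,1)}_q(G/L_S)\cdot \qphs$, which is again immediate from part~1 of the lemma.

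There is no real obstacle here: the geometric content has already been extracted in the preceding lemma, where the identification of the total one-form space with the horizontal forms was established by exploiting the identity $\sum_i \overline{z}_i z_i = 1$ together with the fact that $\adel$ vanishes on $S_q[G/L_S]$. The present proposition is then just the structural restatement of that identification in the language of principal connections, and its proof is a two- or three-line verification.
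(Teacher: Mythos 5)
Your proposal is correct and follows exactly the paper's route: the paper derives the proposition immediately from the preceding lemma (all $(0,1)$-forms are horizontal, the vertical forms vanish) together with the definition of a strong principal connection, which is precisely the verification you spell out. The extra details you supply (idempotency, colinearity of the zero map, the reduction of the strong condition to $\adel p \in \Omega^{(0,1)}_q(G/L_S)\qphs$) are the routine checks the paper leaves implicit.
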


\begin{remark}
The zero projection can in fact be realised as the restriction of any universal connection, just as for the quantum Grassmannians case \cite{KMOS}. For example, an explicit construction of a universal connection is given in \cite[Proposition 5.9]{GAPP}.
\end{remark}

\subsection{The Holomorphic Structure of the Relative Line Modules}\label{subsection:HoloStructures}

We denote by 
$$
\OO_q(G/L^{\mathrm{s}}_S) \simeq \bigoplus_{k \in \mathbb{Z}} \EE_k
$$
the decomposition of $\OO_q(G/L^{\mathrm{s}}_S)$ into homogeneous subspaces given by the $\mathbb{Z}$-grading defined in \textsection \ref{subsection:AltConst}. It follows from \cite[Proposition 5.7]{GAPP} that this is a decomposition of $\OO_q(G/L^{\mathrm{s}}_S)$ into covariant line modules, and moreover, that every relative line module is of this form.


The principal connection identified in the subsection above associates to each $\EE_k$ a left $\OO_q(G)$-covariant connection, which by uniqueness must coincide with the holomorphic structure $\adel_{\EE_k}$. Thus we have produced a principal connection presentation of the holomorphic structure of $\EE_k$. Explicitly, the holomorphic structure can be described as
\begin{align} \label{eqn:explicitHS}
\adel_{\EE_k}: \EE_k \to \Omega^{(0,1)} \otimes_{\OO_q(G/L_S)} \EE_k, & & e \mapsto j^{-1} \circ \adel(e),
\end{align}
where $j$ is the map defined in \textsection \ref{subsection:ConnFromPrinCs}.

\section{A Borel--Weil Theorem for the Irreducible Quantum Flag Manifolds} \label{section:BorelWeil}

In this section we prove the main result of the paper, namely the Borel-Weil theorem for $(0,1)$-Heckenberger--Kolb calculi of the irreducible quantum flag manifolds $\OO_q(G/L_S)$. We also discuss the situation for the opposite complex structure of $\OO_q(G/L_S)$, and give a noncommutative geometric presentation of the quantum homogeneous ring of $S_q[G/L_S]$.

\subsection{The Borel--Weil Theorem} \label{subsection:BW}

In this subsection we prove a direct noncommutative generalisation of the Borel-Weil theorem for the irreducible flag manifolds. 

\begin{thm} \label{thm:BW}
For any irreducible quantum flag manifold $\OO_q(G/L_S)$, it holds that 
\begin{enumerate}
 \item $H^0(\mathcal{E}_k) = z^kU_q(\mathfrak{g})$ is an irreducible $U_q(\mathfrak{g})$-module of highest weight $k\varpi_{x}$,
 \item $H^0(\mathcal{E}_{-k}) = 0$.
\end{enumerate}
for all $k \in \mathbb{Z}_{>0}$, where $\Pi\backslash S$ consists of the simple root $\alpha_x$.
\end{thm}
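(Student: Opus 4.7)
The plan is to reduce both parts of the theorem to the single identity
\[
\ker\!\bigl(\adel\colon\OO_q(G/L^{\mathrm{s}}_S)\to\Omega^{(0,1)}_q(G/L^{\mathrm{s}}_S)\bigr)\,=\,S_q[G/L_S].
\]
This reduction is valid because the principal connection constructed in the previous section is the zero map on horizontal forms and the vertical forms vanish; consequently the holomorphic structure of \eqref{eqn:explicitHS} satisfies $H^0(\EE_k)=\EE_k\cap\ker(\adel)$.

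Granting this identity, both conclusions of the theorem follow by intersecting with the graded pieces $\EE_k$. Since $S_q[G/L_S]$ inherits the $\mathbb{Z}$-grading of \eqref{eqn:gradingonSq} and is concentrated in non-negative degrees, the intersection $\EE_{-k}\cap S_q[G/L_S]$ vanishes for every $k>0$, yielding part (2). For part (1), the intersection $\EE_k\cap S_q[G/L_S]$ is the degree-$k$ component of $S_q[G/L_S]$; the $R$-matrix quadratic presentation of $S_q[G/L_S]$ identifies the span of the $z_i$ with $V_{\varpi_x}$ (up to duality) under the natural $U_q(\mathfrak{g})$-action, and the defining relations force the degree-$k$ piece to be the quantum symmetric power isomorphic to the irreducible module $V_{k\varpi_x}$, realised concretely as the orbit $z^k U_q(\mathfrak{g})$.

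The forward inclusion $S_q[G/L_S]\subseteq\ker(\adel)$ is built into the construction of $\Omega^{(0,1)}_q(G/L^{\mathrm{s}}_S)$. The reverse inclusion is the technical core. Writing an arbitrary element in the $c=1$ presentation as $e=\sum_{I,J}c_{I,J}\,z^I\overline{z}^{\,J}$, the Leibniz rule together with $\adel z_i=0$ gives
\[
\adel e \,=\, \sum_{I,J}c_{I,J}\,z^I\cdot\adel(\overline{z}^{\,J})\qquad\text{in }\,\Omega^{(0,1)}_q(G/L^{\mathrm{s}}_S).
\]
The terms with $|J|=0$ vanish trivially, so the task becomes to show that no nontrivial linear combination of terms with $|J|\geq 1$ can itself be zero. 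The $\mathbb{Z}$-grading on $\Gamma$, under which $\adel\overline{z}_i$ has degree $-1$, splits this into homogeneous components; within each component the quadratic defining relations of $\Gamma$, combined with a PBW-style normal form for $\OO_q(G/L^{\mathrm{s}}_S)$ ordered by total $\overline{z}$-degree, are then used to establish the required injectivity.

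The main obstacle is precisely this injectivity claim, which is the noncommutative analogue of the classical fact that the Wirtinger operator $\overline{\partial}$ has kernel equal to the holomorphic polynomials. It encodes a faithful-flatness-type property of the Heckenberger--Kolb Poisson fodc, and in practice will require a careful induction on the $\overline{z}$-degree, building on the explicit formulas for $\adel\overline{z}_i$ and the module relations defining $\Gamma$.
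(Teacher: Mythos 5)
Your reduction is sound as far as it goes: since the vertical forms vanish and the principal connection is the zero map, $H^0(\EE_k)=\EE_k\cap\ker(\adel)$, and the inclusion $S_q[G/L_S]\subseteq\ker(\adel)$ is indeed built into the construction of $\Omega^{(0,1)}_q(G/L^{\,\mathrm{s}}_S)$. The problem is that the reverse inclusion $\ker(\adel)\subseteq S_q[G/L_S]$, which you correctly identify as ``the technical core,'' is precisely the entire content of the theorem (together with Liouville's theorem, which is its degree-zero component), and you do not prove it. The proposed mechanism --- a PBW-style normal form for $\OO_q(G/L^{\,\mathrm{s}}_S)$ ordered by $\overline{z}$-degree plus an induction using the quadratic relations of $\Gamma$ --- is not carried out, and it is far from routine: one would first have to establish such a normal form modulo the reordering relations \eqref{eqn:multiplication}, the quadratic relations on each factor, and the relation $c=1$, and then control exactly how the module relations defining $\Gamma$ can produce cancellations among the terms $z^I\,\adel(\overline{z}^{\,J})$. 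As written, the proposal restates the theorem rather than proving it.

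For comparison, the paper avoids any computation with the relations of $\Gamma$ by factoring the argument differently. First, Liouville's theorem (Theorem \ref{thm:Liouville}) is proved on the base $\OO_q(G/L_S)$ by a tangent-space argument: $E_x\in T^{(0,1)}$, so $\adel b=0$ forces $E_x\triangleright b=0$, which together with $\usslevi$-invariance makes $b$ a weight-zero highest weight vector, hence a scalar. Second, the appendix (Proposition \ref{prop:bzHW}) uses multiplicity-freeness and the spherical weights to show every highest weight vector of $\EE_k$ has the form $bz^k$ with $b\in\OO_q(G/L_S)$; then $\adel_{\EE_k}(bz^k)=\adel b\otimes z^k$, and torsion-freeness, projectivity, and the absence of zero divisors in $\OO_q(G)$ reduce holomorphicity to $\adel b=0$, i.e.\ to Liouville. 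Part (2) is handled by multiplying a putative holomorphic $e\in\EE_{-k}$ by the various $z_i^{\,k}$ and invoking uniqueness of right inverses in a domain. If you want to salvage your route, the honest path is to prove Liouville's theorem first (the weight argument above is short) and then supply the highest-weight decomposition of $\EE_k$; attempting the normal-form induction directly is likely to be substantially harder than the theorem itself.
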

\begin{proof}
~~~ \\
1. ~From the construction of the Heckenberger--Kolb calculus $\Omega^{(0,1)}_q(G/L^{\,\mathrm{s}}_S)$, we know that $z^k$ is contained in the kernel of the differential $\adel$, for all $k \in \mathbb{Z}_{\geq 0}$. Thus it follows from \eqref{eqn:explicitHS} that $z^k$ is contained in the kernel of the holomorphic structure map $\adel_{\EE_k}$. Since $\adel_{\EE_k}$ is a right $U_q(\mathfrak{g})$-module map, the inclusion 
$$
z^k U_q(\mathfrak{g}) \subseteq H^0(\mathcal{E}_k)
$$
now follows from Schur's lemma.

To show the opposite inclusion, we need to consider the decomposition of $\EE_k$ into irreducible right $U_q(\mathfrak{g})$-modules. As established in Proposition \ref{prop:bzHW}, each irreducible subcomodule contains an element of the form $bz^k$, for some $b \in \OO_q(G/L_S)$. Let us assume that one of these elements $bz^k$, for $b \notin \mathbb{C}1$, were holomorphic. 
This would imply that
  \begin{align*}
 0 =  \adel_{\EE_k}\!(b z^k) = \adel b \otimes z^{k} + b \otimes \adel_{\EE_k}(z^k)
     = \adel b \otimes z^k.
   \end{align*}
Noting that $\Omega^{(0,1)}_q\!\left(G/L^{\mathrm{\,s}}_S\right)$ is a torsion-free right $\OO_q(G/L_S)$-module, that $\mathcal{E}_k$ is projective as left $\OO_q(G/L_S)$-module and that $\OO_q(G/L_S)$ has no zero-divisors, we see that Liouville's theorem implies that 
$$
\adel b \otimes z^k \neq 0.
$$
Thus we see that no such holomorphic element exists, and so, the claimed identity $z^k U_q(\mathfrak{g}) \subseteq H^0(\mathcal{E}_k)$ follows from Schur's lemma. Moreover, it is clear that $z^k U_q(\mathfrak{g})$ is an irreducible $U_q(\mathfrak{g})$-module of highest weight $k\varpi_s$.

\bigskip

2. ~ We now come to the line bundles $\mathcal{E}_{-k}$, for $k \in \mathbb{Z}_{>0}$. Assume there exists a non-zero holomorphic element $e \in \mathcal{E}_{-k}$. 
This implies that, for any $i=1, \dots, N_x$, we have
\begin{align*}
~~~~ \adel\big(ez^{\,k}_i\big) = & \, \adel\big(ez^{\,k}_i\big) 
= \, \adel(e)z^{\,k}_i + e\adel(z^{\,k}_i)
= 0,
\end{align*}
which is to say 
$$
ez^{\,k}_i \in H^0_{q}(G/L_S) := \ker(\adel:\OO_q(G/L_S) \to \Omega^{(0,1)}_q(G/L_S)).
$$ 
Liouville's Theorem \ref{thm:Liouville} implies that $e z_i^k$ must be a non-zero scalar multiple of $1$, and so, we must have a distinct right inverse to $e$, for each $i = 1, \dots, N_x$. However, since $\OO_q(G)$ has no zero divisors, right inverses are unique. To avoid contradiction we are forced to conclude that $\EE_{-k}$ contains no holomorphic elements. 
\end{proof}

\subsection{A Holomorphic Description of the Quantum Homogeneous Coordinate Ring}

Any positive line bundle $\EE$ over a compact K\"ahler manifold $M$ gives an embedding of $M$ into complex projective space $\mathbb{CP}^n$. Moreover, the associated homogeneous coordinate ring $S(M)$ is isomorphic to
\begin{align} \label{eqn:HoloHCR}
\bigoplus_{k \in \mathbb{Z}_{\geq 0}} H^0(\EE^{\otimes k}),
\end{align}
as a graded algebra. For the flag manifolds, the line bundle $\EE_1$ is well-known to be positive, and hence we have an associated projective embedding. For the special case of the Grassmannians, this embedding reduces to the celebrated Pl\"ucker embedding.

The following proposition establishes a quantum generalisation of this result, giving a noncommutative differential geometric realisation of the quantum homogeneous coordinate ring $S_q[G/L_S]$ presented in the Definition \ref{defn:homoCR}. This generalises earlier work in \cite{KLVSCP1,KKCP2,KKCPN} for the case of quantum projective space, and work by Mrozinski and the first and third authors in \cite{KMOS} for the more general family of the quantum Grassmannians.

\begin{prop} \label{prop:HHCR}
For any irreducible quantum flag manifold $\OO_q(G/L_S)$, it holds that 
\begin{align} \label{eqn:HCR}
\bigoplus_{k \in \mathbb{Z}_{\geq 0}} H^0(\mathcal{E}_k) = S_q[G/L_S],
\end{align}
where $\EE_{\pm k}$ is a line module over $\OO_q(G/L_S)$, with $k \in \mathbb{Z}_{\geq 0}$, and both algebras are considered as subalgebras of $\OO_q(G)$.
\end{prop}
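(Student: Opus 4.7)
The plan is to prove the equality of graded algebras by comparing their degree-$k$ homogeneous components for each $k \in \mathbb{Z}_{\geq 0}$. The right-hand side $S_q[G/L_S]$ carries the natural $\mathbb{Z}_{\geq 0}$-grading with $\mathrm{deg}(z_i) = 1$ coming from the grading in \eqref{eqn:gradingonSq}, and its degree-$k$ piece $(S_q[G/L_S])_k$ embeds in $\EE_k$ by construction. The task thus reduces to proving $(S_q[G/L_S])_k = H^0(\EE_k)$ for every $k \geq 0$. The case $k=0$ follows immediately from Theorem \ref{thm:Liouville}, since both sides equal $\mathbb{C}1$.

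For the containment $(S_q[G/L_S])_k \subseteq H^0(\EE_k)$, I would use the fact that the generators $z_i$ satisfy $\adel z_i = 0$ directly from the construction of the Heckenberger--Kolb calculus on $\OO_q(G/L^{\,\mathrm{s}}_S)$. The Leibniz rule then propagates this to all of $S_q[G/L_S]$, so every element of $(S_q[G/L_S])_k$ lies in the kernel of $\adel: \OO_q(G/L^{\,\mathrm{s}}_S) \to \Omega^{(0,1)}_q(G/L^{\,\mathrm{s}}_S)$. Through the principal connection realisation of the holomorphic structure given in \eqref{eqn:explicitHS}, the relation $\adel_{\EE_k}(e) = j^{-1} \circ \adel(e)$ implies that these elements are annihilated by $\adel_{\EE_k}$, hence lie in $H^0(\EE_k)$.

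For the reverse containment $H^0(\EE_k) \subseteq (S_q[G/L_S])_k$, I would invoke Theorem \ref{thm:BW} to identify $H^0(\EE_k) = z^k U_q(\mathfrak{g})$, where $z$ is one of the degree-$1$ generators. Since $z^k$ is a $k$-fold product of a generator, it clearly lies in $(S_q[G/L_S])_k$. It then suffices to verify that $(S_q[G/L_S])_k$ is stable under the right $U_q(\mathfrak{g})$-action on $\OO_q(G)$. This follows since $\mathrm{span}_{\mathbb{C}}\{z_1, \dots, z_{N_x}\}$ is right $U_q(\mathfrak{g})$-invariant (being the matrix coefficients of $V_{\varpi_x}$ with fixed highest weight second argument), combined with the compatibility $(fg) \triangleleft Y = (f \triangleleft Y_{(1)})(g \triangleleft Y_{(2)})$ of the action with multiplication, which inductively carries invariance to every graded piece. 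Hence $z^k U_q(\mathfrak{g}) \subseteq (S_q[G/L_S])_k$, and the two sides coincide.

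The argument is essentially a direct corollary of the Borel--Weil theorem together with the defining property of the Heckenberger--Kolb calculus on $\OO_q(G/L^{\,\mathrm{s}}_S)$ that $\adel$ vanishes on $S_q[G/L_S]$. No serious obstacle is anticipated; the main care to take is bookkeeping about which algebra each object sits in, since $\bigoplus_k H^0(\EE_k)$ and $S_q[G/L_S]$ must be compared as subalgebras of a common ambient algebra (either $\OO_q(G)$ or $\OO_q(G/L^{\,\mathrm{s}}_S)$), as emphasised in the statement itself.
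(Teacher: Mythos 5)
Your proposal is correct and follows essentially the same route as the paper: one inclusion comes from Theorem \ref{thm:BW} together with the fact that $S_q[G/L_S]$ is a right $U_q(\mathfrak{g})$-submodule of $\OO_q(G)$ containing $z^k$, and the other from $\adel z_i=0$ propagated by the Leibniz rule through the realisation \eqref{eqn:explicitHS} of $\adel_{\EE_k}$ as $j^{-1}\circ\adel$. The only (cosmetic) difference is that you organise the argument degree by degree, whereas the paper phrases the second inclusion as ``$\bigoplus_k H^0(\EE_k)$ is a subalgebra containing the generators $z_i$'' --- the underlying computation is identical.
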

\begin{proof}
Recall from Theorem \ref{thm:BW} that $H^0(\EE_k) = z^k U_q(\mathfrak{g})$. Thus since $S_q[G/L_S]$ is a right $U_q(\mathfrak{g})$-submodule of $\OO_q(G)$, it follows that 
\begin{align*}
\bigoplus_{k \in \mathbb{Z}_{\geq 0}} H^0(\EE_k) \sseq S_q[G/L_S].
\end{align*}
We now establish the opposite inclusion. For $e \in H^0(\EE_k), \, e' \in H^0(\EE_l)$, observe that 
\begin{align*}
 \adel_{\EE_{k+l}}(ee') = & \, j^{-1}\!\left(\adel(ee')\right) 
 = \, j^{-1}\!\left((\adel e)e' + e\adel(e')\right) 
 = \, j^{-1}\left(\adel e)e' + e\adel e'\right) 
 = \, 0.
\end{align*}
Thus the right hand side of the equality \eqref{eqn:HCR} is a subalgebra of $\OO_q(G)$. Now $z_i \in z U_q(\mathfrak{g})$, for each $i=1, \dots, N_x$, which is to say, the generators of $S_q[G/L_S]$ are contained in $H^0(\mathcal{E}_1)$. Thus we have the opposite inclusion
$$
S_q[G/L_S] \sseq \bigoplus_{k \in \mathbb{Z}_0} H^0(\mathcal{E}_k),
$$ 
and hence equality of the two algebras.
\end{proof} 

\begin{remark} \label{rem:positiveEE1}
This differential geometric description of the quantum homogeneous coordinate ring of the irreducible quantum flag manifolds hints at the existence of a noncommutative generalisation of the classical \emph{g\'eom\'etrie alg\'ebrique et g\'eometrie analytique} correspondence \cite{SerreGAGA}. See \cite[\textsection 7]{BS} for a detailed discussion of how such a general picture might look.
\end{remark}


\subsection{The Opposite Borel--Weil Theorem} \label{subsection:OppBW}

As discussed in \textsection \ref{subsection:HKTangent}, the Heckenberger--Kolb $(0,1)$-fodc is one of two calculi identified in the classification of left $\OO_q(G)$-covariant finite-dimensional irreducible calculi over the irreducible quantum flag manifolds. We call this other fodc the \emph{Heckenberger--Kolb $(1,0)$-calculus} and denote it by $\Omega^{(1,0)}_q(G/L_S)$. Its tangent space $T^{(1,0)}$ is given by 
\begin{align*}
T^{(1,0)} = \mathrm{span}\left\{F_{\beta} \,|\, \beta \in \overline{R^-_S} \right\}.
\end{align*}
Just as for the $(0,1)$-calculus, each line bundle $\EE_k$ over $\OO_q(G/L_S)$ admits a unique left $\OO_q(G)$-covariant connection 
$$
\del_{\EE_k}: \EE_k \to \Omega^{(0,1)} \otimes_{\OO_q(G/L_S)} \EE_k.
$$
Using a construction dual to that of $(\Omega^{(0,1)}_q(G/L^{\,\mathrm{s}}_S),\adel)$, a covariant fodc $(\Omega^{(1,0)}_q(G/L^{\,\mathrm{s}}_S),\del)$ was introduced in  \cite[3.2.3]{HKdR}. This fodc restricts to the Heckenberger--Kolb $(1,0)$-calculus on $\OO_q(G/L_S)$, and satisfies $\del v = 0$, for all $v \in S_q[G/L_S^{\mathrm{op}}]$. Just as for the $(0,1)$-case, this fodc can be used to give quantum principal bundle presentation of the Heckenberger--Kolb $(1,0)$-fodc, as well as a principal connection description of the connections $\adel_{\EE_k}$, leading to the following $(1,0)$-version of the Borel--Weil theorem. 

\begin{thm} 
For any irreducible quantum flag manifold $\OO_q(G/L_S)$, it holds that
\begin{enumerate}
 \item $H^0(\mathcal{E}_{k}) = 0$,
 \item $H^0(\mathcal{E}_{-k}) = \overline{z}^kU_q(\mathfrak{g})$ is an irreducible $U_q(\mathfrak{g})$-module of highest weight $-w_0(k\varpi_{x})$,
\end{enumerate}
for all $k \in \mathbb{Z}_{>0}$, where $\Pi\backslash S$ consists of the simple root $\alpha_x$, and $H^0 := \ker(\del)$.
\end{thm}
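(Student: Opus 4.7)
The plan is to mirror the proof of Theorem \ref{thm:BW} step by step, with the roles of $\del$ and $\adel$, of $z_i$ and $\overline{z}_i$, and of $E$-root vectors and $F$-root vectors exchanged throughout. The structural ingredients needed (Liouville, the quantum principal bundle, the explicit formula $\del_{\EE_k} = j^{-1}\circ\del$) are already available by the duality noted at the end of \textsection\ref{subsection:OppBW}, so the task is to verify that the same logical skeleton goes through.

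First I would establish the $(1,0)$-analogue of Theorem \ref{thm:Liouville}, namely $\ker\!\big(\del\colon \OO_q(G/L_S)\to \Omega^{(1,0)}_q(G/L_S)\big) = \mathbb{C}\,1$. Since $-\alpha_x\in\overline{R_S^-}$, the root vector $F_x$ lies in $T^{(1,0)}$. Using the tangent space formula \eqref{eqn:tangent.exd} adapted to $T^{(1,0)}$, the equation $\del b = 0$ forces $F_x\triangleright b = 0$; combined with $F_s\triangleright b=0$ for $s\in S$ (by $\usslevi$-invariance) and the weight zero condition, this shows $b$ is a lowest weight vector of weight zero in $\OO_q(G)$, which by the Peter--Weyl decomposition can only be a scalar.

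Next I would assemble the principal bundle data. The fodc $\Omega^{(1,0)}_q(G/L^{\,\mathrm{s}}_S)$ inherits a right $\OO(U_1)$-covariance from the grading $\deg(\overline{z}_i)=\deg(\del \overline{z}_i)=-1$ (with the tensor-product grading on $S_q[G/L_S]\otimes\overline{\Gamma}$), exactly paralleling the $(0,1)$-case. Proposition \ref{prop:PPQPB} applied to the principal pair $(\OO_q(G/L_S),\OO_q(G/L^{\,\mathrm{s}}_S))$ then produces a quantum principal bundle. The identity $\del v=0$ for $v\in S_q[G/L_S^{\mathrm{op}}]$ guarantees, by the same bookkeeping with $c-1$ and $\overline{\partial}$ swapped, that $\Omega^{(1,0)}_q(G/L_S^{\,\mathrm{s}})_{\mathrm{hor}} = \Omega^{(1,0)}_q(G/L_S)\OO_q(G/L^{\,\mathrm{s}}_S)$ and $\Omega^{(1,0)}_q(G/L_S^{\,\mathrm{s}})_{\mathrm{ver}}=0$; therefore the zero map is a strong left $\OO_q(G)$-covariant principal connection, and the holomorphic structure of each $\EE_k$ is explicitly $\del_{\EE_k}=j^{-1}\circ\del$.

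With this in hand, part (2) is immediate: $\del\overline{z}=0$ gives $\overline{z}^{\,k}\in H^0(\EE_{-k})$, and right $U_q(\mathfrak{g})$-covariance of $\del_{\EE_{-k}}$ together with Schur's lemma yields the inclusion $\overline{z}^{\,k}U_q(\mathfrak{g})\subseteq H^0(\EE_{-k})$. The reverse inclusion uses the $(1,0)$-analogue of Proposition \ref{prop:bzHW}: every irreducible right $U_q(\mathfrak{g})$-subcomodule of $\EE_{-k}$ contains an element of the form $b\overline{z}^{\,k}$; if such an element were $\del$-closed, the Leibniz rule would give $\del b\otimes \overline{z}^{\,k}=0$, and torsion-freeness of $\Omega^{(1,0)}_q(G/L^{\,\mathrm{s}}_S)$ as a right $\OO_q(G/L_S)$-module plus the absence of zero divisors in $\OO_q(G)$ and the $(1,0)$-Liouville statement force $b\in\mathbb{C}\,1$. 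The highest weight of $\overline{z}^{\,k}U_q(\mathfrak{g})$ is identified from the weight of $\overline{z}=\overline{z}_{N_x}$, which is $-w_0(\varpi_x)$, giving $-w_0(k\varpi_x)$ for $\overline{z}^{\,k}$. Part (1) then follows by the same argument as part (2) of Theorem \ref{thm:BW} with the roles reversed: if $0\neq e\in H^0(\EE_k)$, then $e\,\overline{z}_i^{\,k}$ lies in $\ker\del\cap\OO_q(G/L_S)=\mathbb{C}\,1$ for each $i=1,\dots,N_x$, producing $N_x\geq 2$ distinct right inverses of $e$ in the domain $\OO_q(G)$, which is impossible.

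The main obstacle I anticipate is the careful verification of the right $\OO(U_1)$-covariance of the dual calculus and the corresponding horizontal/vertical decomposition: because the roles of $z_i$ and $\overline{z}_i$ are swapped and the natural $\mathbb{Z}$-grading picks up the opposite sign, one must be vigilant that the central element $c-1$ and the form $\del c$ still generate a homogeneous sub-bimodule so that the grading descends. Once this is in place, the rest of the proof is a transparent translation of the arguments already given.
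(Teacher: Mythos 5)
Your proposal is correct and follows exactly the route the paper intends: the paper gives no written proof of this theorem, merely asserting that the quantum principal bundle presentation, the principal connection, Liouville's theorem, and the argument of Theorem \ref{thm:BW} all dualise under the exchange of $\del$ with $\adel$ and $z_i$ with $\overline{z}_i$ (using parts 3 and 4 of Proposition \ref{prop:bzHW} in place of parts 1 and 2). Your write-up is a faithful and accurate expansion of that intended dual argument, including the correct identification of the weight $-w_0(k\varpi_x)$ of $\overline{z}^{\,k}$.
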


Just as for the $(0,1)$-case, we can use this result to produce a noncommutative differential geometric description of the opposite quantum homogeneous coordinate ring. 

\begin{cor} 
It holds that
\begin{align} 
\bigoplus_{k \in \mathbb{Z}_{\geq 0}} H^0(\mathcal{E}_{-k}) = S_q[G/L^{\mathrm{op}}_S],
\end{align}
where both algebras are considered as subalgebras of $\OO_q(G)$.
\end{cor}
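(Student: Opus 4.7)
The plan is to imitate almost verbatim the proof of Proposition \ref{prop:HHCR}, replacing the $(0,1)$-data throughout by the $(1,0)$-counterparts supplied in \textsection \ref{subsection:OppBW}. The two key inputs are: (i) the opposite Borel--Weil theorem, which identifies $H^0(\EE_{-k}) = \overline{z}^k U_q(\mathfrak{g})$; and (ii) the fact that $\del$ is a graded derivation on $\Omega^{(1,0)}_q(G/L^{\,\mathrm{s}}_S)$ satisfying $\del v = 0$ for all $v \in S_q[G/L^{\mathrm{op}}_S]$, together with the resulting principal-connection description $\del_{\EE_{-k}} = j^{-1}\circ \del$, dual to \eqref{eqn:explicitHS}.

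For the inclusion $\bigoplus_{k \in \mathbb{Z}_{\geq 0}} H^0(\EE_{-k}) \sseq S_q[G/L^{\mathrm{op}}_S]$, I would invoke the opposite Borel--Weil theorem to write each summand as $\overline{z}^k U_q(\mathfrak{g})$. Since the elements $\overline{z}_i$ generate $S_q[G/L^{\mathrm{op}}_S]$, and since its graded pieces are finite-dimensional right $U_q(\mathfrak{g})$-submodules of $\OO_q(G)$, each subspace $\overline{z}^k U_q(\mathfrak{g})$ is contained in $S_q[G/L^{\mathrm{op}}_S]$, yielding the inclusion at once. For the opposite inclusion, I would first show that the direct sum is a subalgebra of $\OO_q(G)$: given $e \in H^0(\EE_{-k})$ and $e' \in H^0(\EE_{-l})$, the Leibniz rule for $\del$ and the principal-connection formula give
\begin{align*}
\del_{\EE_{-(k+l)}}(ee') = j^{-1}\bigl(\del(ee')\bigr) = j^{-1}\bigl((\del e)e' + e(\del e')\bigr) = 0,
\end{align*}
so $ee' \in H^0(\EE_{-(k+l)})$. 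Because each generator $\overline{z}_i$ satisfies $\del \overline{z}_i = 0$, it lies in $H^0(\EE_{-1})$, and hence the subalgebra they generate, namely $S_q[G/L^{\mathrm{op}}_S]$, sits inside $\bigoplus_k H^0(\EE_{-k})$, closing the argument.

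No genuine obstacle is anticipated; the argument is purely formal once the opposite Borel--Weil theorem and the $(1,0)$-principal-connection description of $\del_{\EE_{-k}}$ are in hand. The one place requiring care is verifying that the strong principal connection arising in \textsection \ref{subsection:OppBW} from the dual construction (again the zero map on horizontal forms) indeed recovers $\del_{\EE_{-k}}$ through $j^{-1}\circ \del$, legitimising the Leibniz-rule computation; but this is precisely what is asserted in the paragraph preceding the opposite Borel--Weil theorem, so it may be invoked directly.
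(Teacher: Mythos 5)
Your proposal is correct and follows essentially the same route as the paper, which leaves this corollary's proof implicit as the exact $(1,0)$-mirror of the proof of Proposition \ref{prop:HHCR}: both directions rest on the opposite Borel--Weil identification $H^0(\mathcal{E}_{-k}) = \overline{z}^k U_q(\mathfrak{g})$, the fact that $S_q[G/L^{\mathrm{op}}_S]$ is a right $U_q(\mathfrak{g})$-submodule of $\OO_q(G)$, and the Leibniz-rule computation showing the direct sum of kernels is a subalgebra containing the generators $\overline{z}_i$.
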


\appendix

\section{Relative Line Modules and Spherical Generators}

To prove the Borel--Weil theorem we used the fact that every highest weight vector in $\EE_k$ is for the form $bz^k$, for some $b \in \OO_q(G/L_S)$. The proof is essentially the same as in the classical case, but for the reader's convenience, we give an explicit proof.

\subsection{The Table of Spherical Weights}

In Table \ref{table:CQFMs} below we recall the precise subsets $S$ of simple roots $\Pi$ defining the irreducible quantum flag manifolds. We represent $S^c := \Pi\backslash S$ graphically by coloured nodes in the Dynkin diagram of $\mathfrak{g}$. 

The set of highest weights of the $U_q(\mathfrak{g})$-algebra $\mathcal{O}_q(G/L_S)$ forms an additive submonoid $Z_S \subseteq \mathfrak{h}^*$ under addition. A distinguished minimal set of generators for $Z_S$, the {\em spherical weights}, was presented in the classical case by Kr\"amer in \cite[Tabelle 1]{KramerSpherical}. 

\begin{center}
  \captionof{table}{\small{Irreducible quantum flag manifolds with defining crossed node} \\
  ~~ } \label{table:CQFMs}
 \begin{tabular}{|c|c|c|}

\hline
& & \\
$A_n$&
\begin{tikzpicture}[scale=.5]
\draw
(0,0) circle [radius=.25] 
(8,0) circle [radius=.25] 
(2,0) circle [radius=.25] 
(6,0) circle [radius=.25] ; 

\draw[fill=black]
(4,0) circle [radius=.25] ;

\draw[thick,dotted]
(2.25,0) -- (3.75,0)
(4.25,0) -- (5.75,0);

\draw[thick]
(.25,0) -- (1.75,0)
(6.25,0) -- (7.75,0);
\end{tikzpicture}  & $\OO_q(\mathrm{Gr}_{n,m})$\\

& & \\

$B_n$ &
\begin{tikzpicture}[scale=.5]
\draw
(4,0) circle [radius=.25] 
(2,0) circle [radius=.25] 
(6,0) circle [radius=.25] 
(8,0) circle [radius=.25] ; 
\draw[fill=black]
(0,0) circle [radius=.25];

\draw[thick]
(.25,0) -- (1.75,0);

\draw[thick,dotted]
(2.25,0) -- (3.75,0)
(4.25,0) -- (5.75,0);

\draw[thick] 
(6.25,-.06) --++ (1.5,0)
(6.25,+.06) --++ (1.5,0);           

\draw[thick]
(7,0.15) --++ (-60:.2)
(7,-.15) --++ (60:.2);
\end{tikzpicture} & $\OO_q(\mathbf{Q}_{2n+1})$  \\ 

& & \\

 $C_n$& 
\begin{tikzpicture}[scale=.5]
\draw
(0,0) circle [radius=.25] 
(2,0) circle [radius=.25] 
(4,0) circle [radius=.25] 
(6,0) circle [radius=.25] ; 
\draw[fill=black]
(8,0) circle [radius=.25];

\draw[thick]
(.25,0) -- (1.75,0);

\draw[thick,dotted]
(2.25,0) -- (3.75,0)
(4.25,0) -- (5.75,0);

\draw[thick] 
(6.25,-.06) --++ (1.5,0)
(6.25,+.06) --++ (1.5,0);           

\draw[thick]
(7,0) --++ (60:.2)
(7,0) --++ (-60:.2);
\end{tikzpicture} &  $\OO_q(\mathbf{L}_{n})$   \\ 

& & \\

 $D_n$& 
\begin{tikzpicture}[scale=.5]

\draw[fill=black]
(0,0) circle [radius=.25] ;

\draw
(2,0) circle [radius=.25] 
(4,0) circle [radius=.25] 
(6,.5) circle [radius=.25] 
(6,-.5) circle [radius=.25];

\draw[thick]
(.25,0) -- (1.75,0)
(4.25,0.1) -- (5.75,.5)
(4.25,-0.1) -- (5.75,-.5);

\draw[thick,dotted]
(2.25,0) -- (3.75,0);
\end{tikzpicture} &  $\OO_q(\mathbf{Q}_{2n})$  \\ 

& & \\

 $D_n$ & 
\begin{tikzpicture}[scale=.5]
\draw
(0,0) circle [radius=.25] 
(2,0) circle [radius=.25] 
(4,0) circle [radius=.25] ;

\draw[fill=black] 
(6,.5) circle [radius=.25];
\draw
(6,-.5) circle [radius=.25];

\draw[thick]
(.25,0) -- (1.75,0)
(4.25,0.1) -- (5.75,.5)
(4.25,-0.1) -- (5.75,-.5);

\draw[thick,dotted]
(2.25,0) -- (3.75,0);
\end{tikzpicture} &  $\OO_q(\textbf{S}_{n})$  \\

 $E_6$& \begin{tikzpicture}[scale=.5]
\draw
(2,0) circle [radius=.25] 
(4,0) circle [radius=.25] 
(4,1) circle [radius=.25]
(6,0) circle [radius=.25] ;

\draw
(0,0) circle [radius=.25];
\draw[fill=black] 
(8,0) circle [radius=.25];

\draw[thick]
(.25,0) -- (1.75,0)
(2.25,0) -- (3.75,0)
(4.25,0) -- (5.75,0)
(6.25,0) -- (7.75,0)
(4,.25) -- (4, .75);
\end{tikzpicture}

 & $\OO_q(\mathbb{OP}^2)$  \\
 
 & & \\
 
 $E_7$& 
\begin{tikzpicture}[scale=.5]
\draw
(0,0) circle [radius=.25] 
(2,0) circle [radius=.25] 
(4,0) circle [radius=.25] 
(4,1) circle [radius=.25]
(6,0) circle [radius=.25] 
(8,0) circle [radius=.25];

\draw[fill=black] 
(10,0) circle [radius=.25];

\draw[thick]
(.25,0) -- (1.75,0)
(2.25,0) -- (3.75,0)
(4.25,0) -- (5.75,0)
(6.25,0) -- (7.75,0)
(8.25, 0) -- (9.75,0)
(4,.25) -- (4, .75);
\end{tikzpicture} &  $\OO_q(\textbf{F})$ 
  \\
 & & \\
\hline
\end{tabular}
\end{center}

\begin{center}
\captionof{table}{\small{ We use Humphrey's numbering of the Dynkin nodes \cite[\textsection 11.4]{Humph} of $\mathfrak{g}$. For the spherical weights of $\O_q(\mathbf{S}_{2m})$, the weight $2\varpi_{2m-1}$ or $2\varpi_{2m}$ appears depending on the defining crossed node, as presented in Table \ref{table:CQFMs} above. \\
  ~~ }} \label{table:SphericalWs}
\begin{tabular}{|c|c|l}
\hline
   & \\
{\em \bf $\O_q(G/L_S)$ }
 & Spherical Weights\\
\hline
   & \\ 
$\O_q(\text{Gr}_{n,m})$  &  $\varpi_{1} + \varpi_{n-1}, \, \varpi_{2} + \varpi_{n-2}, \dots, \varpi_{m} + \varpi_{n-m}$\\
   & \\ 
$\O_q(\mathbf{Q}_{2n+1})$ & $2\varpi_1, \, \varpi_2$ \\    & \\ 
$\O_q(\mathbf{L}_n)$ & $2\varpi_1, \, 2\varpi_2, \dots, 2\varpi_{n}$ \\    & \\ 
$\O_q(\mathbf{Q}_{2n})$ & $2\varpi_1, \, \varpi_2$ \\    & \\ 
$\O_q(\mathbf{S}_{2m})$ & $\varpi_2, \, \varpi_4, \dots, \varpi_{2m-2}, \, 2\varpi_{2m-1} \text{ or } 2\varpi_{2m}$\\    & \\ 
   $\O_q(\mathbf{S}_{2m+1})$ & $\varpi_2, \, \varpi_4, \dots, \varpi_{2m-2}, \, \varpi_{2m} + \varpi_{2m+1}$\\    & \\ 
$\O_q(\mathbb{OP}^2)$ & $\varpi_1 + \varpi_6$,  $\varpi_{2}$\\    & \\ 
$\O_q(\mathbf{F})$ & $\varpi_{1}, \, \varpi_{6}, \, 2\varpi_{7}$\\
 & \\
\hline
\end{tabular}
\end{center}

Since the Weyl character formula is unchanged under $q$-deformation (see \cite[\textsection 7.1.4]{KSLeabh} for details) Kr\"amer's result carries over directly to the quantum setting, and we present it as such in Table \ref{table:SphericalWs} above.

Let us now identify the properties of the spherical weights that are used below. Firstly we highlight the fact that since $G/L_S$ is an Hermitian symmetric space, $\OO_q(G/L_S)$ is multiplicity free as a $U_q(\mathfrak{g})$-module. Secondly we note that the sum of the crossed node $\varpi_x$ and its dual $-w_0(\varpi_x)$ is a spherical generator for each irreducible quantum flag, while neither the crossed node nor its dual appear individually as generators. It is perhaps helpful to observe that in the cases where the crossed node is self-dual (which is to say the spaces $\OO_q(\mathbf{Q}_{2n+1}), \OO_q(\mathbf{L}_{n})$, $\OO_q(\mathbf{Q}_{2n})$, $\OO_q(\mathbf{S}_{2m})$ and $\OO_q(\mathbf{F})$) we have that $\varpi_x -w_0(\varpi_x) = 2\varpi_x$.

\subsection{Spherical Generators for Line bundles}

In this subsection, we produce the required presentation of the highest weight elements of the line bundles $\EE_k$, for $k \in \mathbb{Z}$. The proof relies on the observation that since the product of two highest weight elements of $\OO_q(G)$ is again a highest weight element, the set of highest weight elements of $\OO_q(G)$ forms a submonoid of $A$. (For a more detailed discussion of the properties of highest weight vectors in $U_q(\mathfrak{g})$-algebras see \cite[\textsection 4]{DOS1}.)

\begin{prop} \label{prop:bzHW}
For every $k \in \mathbb{Z}_{\geq 0}$, it holds that 
\begin{enumerate}
\item the set of highest weights of $\EE_k$ is given by
$$
\{\lambda - k w_0(\varpi_x) \,| \, \lambda \textrm{ a highest weight of } \EE_0 \},
$$

\item every highest weight element of $\EE_k$ is of the form $bz^k$, for some $b \in \OO_q(G/L_S)$,

\item the set of highest weights of $\EE_{-k}$ is given by
$$
\{\lambda + k \varpi_x \,| \, \textrm{ for } \lambda \textrm{ is a highest weight of } \EE_0 \},
$$

\item every highest weight element of $\EE_{-k}$ is of the form $b\overline{z}^k$, for some $b \in \OO_q(G/L_S)$.
\end{enumerate}
\end{prop}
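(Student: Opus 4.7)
The plan is to imitate the classical argument, exploiting the remark preceding the proposition that right $U_q(\mathfrak{g})$-highest-weight elements of $\OO_q(G)$ form a multiplicative submonoid. Since (1) follows from (2) by tracking weights (and similarly (3) follows from (4)), I focus on (2) and (4), and deduce the weight descriptions as a byproduct.

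First I would identify distinguished right HW elements $z \in \EE_1$ and $\overline{z} \in \EE_{-1}$ via the matrix-coefficient presentation of $\sslevi$: the right-$U_q(\mathfrak{g})$-submodule $\mathrm{span}\{z_i\} \subseteq \EE_1$ is isomorphic to $V_{-w_0(\varpi_x)}$ (with the right action acting on the covector arguments via the appropriate dual representation), and dually $\mathrm{span}\{\overline{z}_j\} \subseteq \EE_{-1}$ is isomorphic to $V_{\varpi_x}$. Each therefore contains a unique-up-to-scalar right HW vector, which I take as $z$, of weight $-w_0(\varpi_x)$, and $\overline{z}$, of weight $\varpi_x$. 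The submonoid property of HW elements then yields $z^k$ and $\overline{z}^k$ as right HW of $\EE_{\pm k}$ of weights $-k w_0(\varpi_x)$ and $k\varpi_x$ respectively, so that for any right HW $b \in \OO_q(G/L_S)$ of weight $\lambda$, the products $bz^k$ and $b\overline{z}^k$ are right HW elements of the asserted weights, giving the ``$\supseteq$'' direction of (1), (3) and the existence halves of (2), (4).

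For the reverse implications I would proceed by induction on $k$; the base $k=0$ is trivial. For $k > 0$, let $v \in \EE_k$ be right HW of weight $\mu$. Then $v\overline{z} \in \EE_{k-1}$ is again right HW (of weight $\mu + \varpi_x$), so the inductive hypothesis gives $v\overline{z} = b_1 z^{k-1}$ for some right HW $b_1 \in \OO_q(G/L_S)$. Multiplying on the right by $z$ yields $v \cdot (\overline{z}z) = b_1 z^k$, where $\overline{z}z \in \EE_0$ is right HW of weight $\varpi_x - w_0(\varpi_x)$; direct inspection of Kr\"amer's Table \ref{table:SphericalWs} confirms that this weight always appears as a spherical generator $\sigma_0$ of $Z_S$, both in the self-dual cases (as $2\varpi_x$) and in the non-self-dual cases (as $\varpi_x + \varpi_{x^*}$ with $\varpi_{x^*} = -w_0(\varpi_x)$). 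Multiplicity-freeness of $\OO_q(G/L_S)$ as a right $U_q(\mathfrak{g})$-module, inherited from the Hermitian symmetric nature of $G/L_S$, forces $\overline{z}z$ to coincide up to a nonzero scalar with $\sigma_0$, and an analogous multiplicity-freeness identification factors $b_1 = b\sigma_0$ (up to scalar) for some right HW $b \in \OO_q(G/L_S)$. Cancelling $\sigma_0$ in $v\sigma_0 = b\sigma_0 z^k$, which is permissible since $\OO_q(G)$ is a domain, yields $v = bz^k$. The argument for $\EE_{-k}$ is symmetric under $z \leftrightarrow \overline{z}$.

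The main obstacle is the factorization $b_1 = b\sigma_0$ in the last step: this amounts to the representation-theoretic assertion that the spherical-weight expansion of $\mathrm{wt}(b_1) \in Z_S$ has $\sigma_0$-coefficient at least one, which must be established from the explicit admissible weights of $\EE_k$ dictated by Kr\"amer's table together with the $\mathbb{Z}$-grading of $\sslevi$. Once this factorization is secured, the remainder of the argument is a formal bookkeeping exercise tracking the addition of weights under multiplication of HW elements.
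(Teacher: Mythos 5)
Your toolkit---multiplicativity of highest weight elements, the absence of zero divisors in $\OO_q(G)$, multiplicity-freeness, and Kr\"amer's table---is exactly the paper's, but your route through the reverse inclusion differs and, as written, does not close. The paper avoids induction entirely: it considers the single map $M_{\overline{z}^k}:(\EE_k)_{\mathrm{hw}} \to (\EE_0)_{\mathrm{hw}}$, $e \mapsto \overline{z}^k e$, injective because $\OO_q(G)$ is a domain. Injectivity gives multiplicity-freeness of $\EE_k$ in one stroke, and the resulting condition $\mathrm{wt}(e) + k\varpi_x \in Z_S$, combined with the fact that $\sigma_0 := \varpi_x - w_0(\varpi_x)$ is the \emph{only} spherical generator involving the crossed node or its dual, yields $\mathrm{wt}(e) = \lambda - kw_0(\varpi_x)$ with $\lambda \in Z_S$. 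Since the elements $bz^k$ realise every such weight and each highest weight space of $\EE_k$ is one-dimensional, they exhaust $(\EE_k)_{\mathrm{hw}}$; no factorisation or cancellation is ever required.

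The genuine gap in your version is the step you yourself flag, and it is not bookkeeping. Unwinding weights, $\mathrm{wt}(b_1) = \mathrm{wt}(v) + \varpi_x + (k-1)w_0(\varpi_x)$, so requiring $\mathrm{wt}(b_1) - \sigma_0 \in Z_S$ is requiring exactly that $\mathrm{wt}(v) = \lambda - kw_0(\varpi_x)$ for some $\lambda \in Z_S$---that is, part (1) of the proposition for the very $k$ under consideration, which the inductive hypothesis at level $k-1$ does not supply. Knowing only $\mathrm{wt}(b_1) \in Z_S$ does not force a positive $\sigma_0$-coefficient: in the self-dual cases, where $\sigma_0 = 2\varpi_x$, a vanishing $\sigma_0$-coefficient gives $\mathrm{wt}(v) = \beta + (k-2)\varpi_x$, which is dominant for $k \geq 2$, so dominance alone cannot rule it out and the deferred input ``Kr\"amer's table plus the $\mathbb{Z}$-grading'' is the heart of the argument. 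A second, smaller defect: from $v\sigma_0 = b\sigma_0 z^k$ one cannot cancel $\sigma_0$ in the noncommutative domain $\OO_q(G)$, since it sits on the right of one side and in the middle of the other; one must first commute $\sigma_0$ past $z^k$ up to a nonzero scalar, which again uses multiplicity-freeness of $\EE_k$---a fact your argument has not established at that stage, whereas the paper obtains it for free from the injectivity of $M_{\overline{z}^k}$.
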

\begin{proof}
1,2 ~ Since the element $\overline{z}^k \in \EE_k$ is a highest weight vector, and the highest weight elements of $\OO_q(G)$ are closed under multiplication, we have a well-defined map
\begin{align*}
  M_{\overline{z}^k}: (\EE_k)_{\mathrm{hw}} \to (\EE_0)_{\mathrm{hw}}, & & e \mapsto \overline{z}^ke.
\end{align*}
Moreover, since $\OO_q(G)$ has no zero divisors, $M_{\overline{z}^k}$ is injective. Injectivity implies that the dimensions of the highest weight spaces are preserved. Thus since $\EE_0$ is multiplicity free, $\EE_k$ must be multiplicity free. 

Note next that since $\mathrm{wt}(\overline{z}^k) = - kw_0(\varpi_x)$, we have, for any $e \in (\EE_k)_{\hw}$, that
\begin{align*}
\mathrm{wt}(M_{\overline{z}^k}(e)) = \mathrm{wt}(\overline{z}^ke) = \mathrm{wt}(\overline{z}^k) + \mathrm{wt}(e) = k\varpi_x + \mathrm{wt}(e).
\end{align*}

However, $M_{\overline{z}^k}(e) \in \EE_0$, so there exists an $l \in \mathbb{Z}_{\geq k}$ such that 
$$
k\varpi_x + \mathrm{wt}(e) = l(\varpi_x - w_0(\varpi_x)) + \beta,
$$
where $\beta \in \mathcal{P}^+$ is a weight contained in the $\mathbb{Z}$-span of all the spherical weights apart from $\varpi_x-w_0(w_x)$. This means that 
$$
\mathrm{wt}(e) = (l-k)\varpi_x - lw_0(\varpi_x) + \beta,
$$
or equivalently, that every weight of $\EE_k$ is of the form 
\begin{align}\label{eqn:weightvectors}
\lambda - k w_0(\varpi_x), & & \textrm{ for $\lambda$ a highest weight of $\EE_0$}.
\end{align}

Let us now show that every such weight is realised. Let $b \in \EE_0$ be a highest weight vector of weight $\lambda$. We see that $bz^k_{1} \in \EE_k$, and that 
$$
\mathrm{wt}(z^kb) = - kw_0(\varpi_x) + \lambda.
$$
Thus we have found a weight vector for each weight of the form \eqref{eqn:weightvectors}. Moreover, we have shown that every highest weight vector of $\EE_k$ is of the form $bz^k$.

3,4. ~ The proof for the case of the negative line bundles $\EE_{-k}$ is analogous, and so, we omit it.
\end{proof}


\section{Noncommutative Complex and Holomorphic Structures}

In order to keep the necessarily preliminaries to a minimum, this paper is presented in terms of first-order differential calculi. However, these objects find their fullest geometric presentation when considered as part of a differential graded algebra. This is even more so when one is considered noncommutative complex geometry. In this appendix, with a view to better motivating the results of the paper, we recall the basic definitions and results of the theory of differential calculi, complex structures, and holomorphic structures, as well as a summary of the noncommutative complex geometry of the Heckenberger--Kolb calculi.

\subsection{Differential Calculi}

A {\em differential calculus} $\big(\Omega^\bullet \simeq \bigoplus_{k \in \bZ_{\geq 0}} \Omega^k, \exd\big)$ is a differential graded algebra that is generated as an algebra by the elements $a, \exd b$, for $a,b \in \Omega^0$. 
For a given algebra $B$, a differential calculus {\em over} $B$ is a differential calculus such that $B = \Omega^0$.
A {\em differential $*$-calculus} over a $*$-algebra $B$ is a differential calculus over $B$ such that the \mbox{$*$-map} of $B$ extends to a (necessarily unique) conjugate linear involutive map $*:\Omega^\bullet \to \Omega^\bullet$ satisfying $\exd(\omega^*) = (\exd \omega)^*$, and 
\begin{align*}
\big(\omega \wed \nu\big)^* = (-1)^{kl} \nu^* \wed \omega^*, & & \text{ for all } \omega \in \Om^k, \, \nu \in \Omega^l. 
\end{align*}

Any fodc admits an extension to a differential calculus $\Omega^\bullet$ which is maximal in the sense that there exists a unique differential map from $\Omega^\bullet$ onto any other extension of $\Omega^1$, see \cite[\textsection 1.5]{BeggsMajid:Leabh} for details. We call this extension the {\em maximal prolongation} of the first-order calculus. 

A differential calculus $\Omega^\bullet$ over a left $A$-comodule algebra $P$ is said to be {\em covariant} if the coaction $\DEL_L:P \to A \otimes P$ extends to a (necessarily unique) $A$-comodule algebra structure $\DEL_L:\Om^\bullet \to A \otimes \Omega^\bullet$, with respect to which the differential $\exd$ is a left $A$-comodule map. Covariance for a right $A$-comodule algebra is defined analogously. The maximal prolongation of a covariant fodc is covariant. See \cite[\textsection 1]{BeggsMajid:Leabh} for a more detailed discussion of differential calculi.

\subsection{Complex Structures}

We now recall the definition of a complex structure as introduced in \cite{ BS,KLVSCP1}. This abstracts the properties of the de Rham complex of a classical complex manifold \cite{HUY}. 

\begin{defn}
A {\em complex structure} $\Om^{(\bullet,\bullet)}$ for a differential $*$-calculus $(\Om^{\bullet},\exd)$ is a choice of \mbox{$\mathbb{Z}_{\geq 0}^2$-algebra} grading $\bigoplus_{(a,b)\in \mathbb{Z}_{\geq 0}^2} \Om^{(a,b)}$ for $\Om^{\bullet}$ such that
\begin{align*}
1. \, \Om^k = \bigoplus_{a+b = k} \Om^{(a,b)}, & & 2.\, 
\big(\Om^{(a,b)}\big)^* = \Om^{(b,a)}, & & 3. \, \exd \Om^{(a,b)} \subseteq \Omega^{(a+1,b)} \oplus \Omega^{(a,b+1)},
\end{align*}
for all $k \in \mathbb{Z}_{\geq 0}$, and $(a,b) \in \mathbb{Z}^{2}_{\geq 0}$.
\end{defn}

We call an element of $\Om^{(a,b)}$ an $(a,b)$-form, and denote
\begin{align*}
\del|_{\Om^{(a,b)}} : = \proj_{\Om^{(a+1,b)}} \circ \exd, & & \ol{\del}|_{\Om^{(a,b)}} : = \proj_{\Om^{(a,b+1)}} \circ \exd,
\end{align*}
where $\proj_{\Om^{(a+1,b)}}$, and $\proj_{\Om^{(a,b+1)}}$, are the projections from $\Omega^{a+b+1}$ to $\Omega^{(a+1,b)}$, and $\Omega^{(a,b+1)}$, respectively. It follows directly from the definition of a complex structure that the triple 
$
(\Om^{(\bullet,\bullet)}, \del,\ol{\del})
$
is a double complex.  Moreover, both $\del$ and $\adel$ are $*$-maps and satisfy the graded Leibniz rule. The {\em opposite} complex structure of a complex structure $\Om^{(\bullet,\bullet)}$ is the \mbox{$\mathbb{Z}^2_{\geq 0}$}-algebra grading $\overline{\Om}^{(\bullet,\bullet)}$, defined by $\ol{\Om}^{(a,b)} := \Om^{(b,a)}$, for $(a,b) \in \mathbb{Z}_{\geq 0}^2$. 

For $\Om^\bullet$ a covariant differential $*$-calculus $\Om^\bullet$ over a left $A$-comodule algebra $P$, we say that a complex structure for $\Om^\bullet$ is {\em covariant} if the $\mathbb{Z}_{\geq 0}^2$-decomposition is a decomposition in $^A\mathrm{Mod}$, the category of left $A$-comodules. 

\subsection{Holomorphic Structures}

Building on this idea we define noncommutative holomorphic vector bundles via the classical Koszul--Malgrange characterisation of holomorphic bundles \cite{KM}

Any connection can be extended to a map $\nabla: \Omega^\bullet \otimes_B \mathcal{F} \to \Omega^\bullet \otimes_B \mathcal{F}$ uniquely defined by 
\begin{align*}
\nabla(\omega \otimes f) =  \exd \omega \otimes f + (-1)^{|\omega|} \, \omega \wedge \nabla f, & & \textrm{for } f \in \F, \, \omega \in \Omega^{\bullet},
\end{align*}
for a homogeneous form $\omega$ with degree $|\omega|$. The \emph{curvature} of a connection is the left $B$-module map $\nabla^2: \mathcal{F} \to \Omega^2 \otimes_B \mathcal{F}$. A connection is said to be {\em flat} if $\nabla^2 = 0$. Since $\nabla^2(\omega \otimes f) = \omega \wedge \nabla^2(f)$, a connection is flat if and only if the pair $(\Omega^\bullet \otimes_B \F, \nabla)$ is a complex. 

\begin{defn}
For an algebra $B$, a \emph{holomorphic vector bundle over $B$} is a pair $(\mathcal{F},\adel_{\mathcal{F}})$, where $\mathcal{F}$ is a finitely generated projective left $B$-module, and the $\adel_{\mathcal{F}}: \mathcal{F} \to \Omega^{(0,1)} \otimes_B \mathcal{F}$ is a flat $(0,1)$-connection, which we call the \emph{holomorphic structure} for $(\F, \adel_{\F})$. 
\end{defn}

The requirement of flatness means that any holomorphic vector bundle has an associated noncommutative Dolbeault cohomology $H^{\bullet}(\F)^{(\bullet,\bullet)}$. Moreover, as shown in \cite{OSV}, there is an associated noncommutative generalisation of the Kodaira vanishing theorem.

\subsection{The Irreducible Quantum Flag Manifolds}

A $*$-algebra structure on $U_q(\mathfrak{g})$, called the {\em compact real form}, is defined by
\begin{align*}
K^*_i : = K_i, & & E^*_i := K_i F_i, & & F^*_i := E_i K_i \inv,
\end{align*}
and gives $U_q(\mathfrak{g})$ the structure of a Hopf $*$-algebra. This $*$-structure dualises to a Hopf $*$-algebra structure on $\OO_q(G)$. The quantum flag manifolds $\OO_q(G/L_S)$ are $*$-subalgebras of $\OO_q(G/L_S)$  since $U_q(\mathfrak{l}_S)$ is a Hopf $*$-subalgebra of $U_q(\mathfrak{g})$. 

The \emph{Heckenberger--Kolb calculus} of $\OO_q(G/L_S)$ is the direct sum fodc 
$$
\Omega^{1}_q(G/L_S) := \Omega^{(1,0)}_q(G/L_S) \oplus \Omega^{(0,1)}_q(G/L_S).
$$ 
The maximal prolongation of $\Omega^{1}_q(G/L_S)$ is a covariant differential $*$-calculus of classical dimension. Moreover, it has a unique pair of opposite left $\OO_q(G)$-covariant complex structures extending the decomposition of $\Omega^1_q(G/L_S)$ into its $(1,0)$ and $(0,1)$-summands. For the quantum flag manifolds $\OO_q(G/L_S)$, and the associated line modules $\EE_k$, for $k \in \mathbb{Z}$, the left $\OO_q(G)$-covariant $(0,1)$-connections presented in \textsection \ref{subsection:HoloStructures} are in fact holomorphic structures \cite[\textsection 4]{DOKSS}. Thus they give a direct $q$-deformation of the classical holomoprhic structures of the line bundles over $G/L_S$. Moreover, the analogous results hold for the opposite complex structure and the $(1,0)$-connections $\del_{\EE_k}$.

\bibliographystyle{siam}

\begin{thebibliography}{10}

\bibitem{APW}
{\sc H.~H. Andersen, P.~Polo, and K.~X. Wen}, {\em Representations of quantum
  algebras}, Invent. Math., 104 (1991), pp.~1--59.

\bibitem{Avdb}
{\sc M.~Artin and M.~Van~den Bergh}, {\em Twisted homogeneous coordinate
  rings}, J. Algebra, 133 (1990), pp.~249--271.

\bibitem{BeggsMajid:Leabh}
{\sc E.~Beggs and S.~Majid}, {\em Quantum Riemannian Geometry}, vol.~355 of
  Grundlehren der mathematischen Wissenschaften, Springer International
  Publishing, 1~ed., 2019.

\bibitem{BS}
{\sc E.~Beggs and S.~Paul~Smith}, {\em Non-commutative complex differential
  geometry}, J. Geom. Phys., 72 (2013), pp.~7--33.

\bibitem{BraveFlags}
{\sc A.~Braverman}, {\em On quantum flag algebras}, C. R. Acad. Sci. Paris
  S\'{e}r. I Math., 320 (1995), pp.~1055--1059.

\bibitem{TBSM1}
{\sc T.~Brzeziński and S.~Majid}, {\em Quantum group gauge theory on quantum
  spaces}, Communications in Mathematical Physics, 157 (1993), p.~591–638.

\bibitem{GAPP}
{\sc A.~Carotenuto and R.~O. Buachalla}, {\em Principal pairs of quantum
  homogeneous spaces}.
\newblock {arXiv preprint math.QA/2111.11284}, 2021.

\bibitem{KMOS}
{\sc A.~Carotenuto, C.~Mrozinski, and R.~\'O~Buachalla}, {\em A {B}orel--{W}eil
  theorem for the quantum {G}rassmannians}.
\newblock {arXiv preprint math.QA/1611.07969}.

\bibitem{DOSFred}
{\sc B.~Das, R.~\'O~Buachalla, and P.~Somberg}, {\em {D}olbeault--{D}irac
  {F}redholm operators on quantum homogeneous spaces}.
\newblock {arXiv preprint math.QA/1910.14007}.

\bibitem{DOS1}
\leavevmode\vrule height 2pt depth -1.6pt width 23pt, {\em {D}olbeault--{D}irac
  spectral triples on quantum projective space}.
\newblock {arXiv preprint math.QA/1903.07599}.

\bibitem{DOKSS}
{\sc F.~D\'{i}az~Garc\'{i}a, A.~Krutov, R.~\'O~Buachalla, P.~Somberg, and K.~R.
  Strung}, {\em Positive line bundles over the irreducible quantum flag
  manifolds}.
\newblock {arXiv preprint math.QA/1912.08802}.

\bibitem{HVBQFM}
{\sc F.~D\'{\i}az~Garc\'{\i}a, A.~Krutov, R.~\'{O}~Buachalla, P.~Somberg, and
  K.~R. Strung}, {\em Holomorphic relative {H}opf modules over the irreducible
  quantum flag manifolds}, Lett. Math. Phys., 111 (2021), pp.~Paper No. 10, 24.

\bibitem{GZ}
{\sc A.~R. Gover and R.~B. Zhang}, {\em Geometry of quantum homogeneous vector
  bundles and representation theory of quantum groups. {I}}, Rev. Math. Phys.,
  11 (1999), pp.~533--552.

\bibitem{HKfodcQHS}
{\sc I.~Heckenberger and S.~Kolb}, {\em Differential calculus on quantum
  homogeneous spaces}, Lett. Math. Phys., 63 (2003), pp.~255--264.

\bibitem{HK}
{\sc I.~Heckenberger and S.~Kolb}, {\em The locally finite part of the dual
  coalgebra of quantized irreducible flag manifolds}, Proc. London Math. Soc.
  (3), 89 (2004), pp.~457--484.

\bibitem{HKdR}
\leavevmode\vrule height 2pt depth -1.6pt width 23pt, {\em De {R}ham complex
  for quantized irreducible flag manifolds}, J. Algebra, 305 (2006),
  pp.~704--741.

\bibitem{Humph}
{\sc J.~E. Humphreys}, {\em Introduction to {L}ie algebras and representation
  theory}, vol.~9 of Graduate Texts in Mathematics, Springer-Verlag, New
  York-Berlin, 1978.
\newblock Second printing, revised.

\bibitem{HUY}
{\sc D.~Huybrechts}, {\em Complex geometry, An introduction}, Universitext,
  Springer-Verlag, Berlin, 2005.

\bibitem{JuSt}
{\sc B.~Jur\v{c}o and P.~\v{S}\v{t}ov\'{\i}\v{c}ek}, {\em Coherent states for
  quantum compact groups}, Comm. Math. Phys., 182 (1996), pp.~221--251.

\bibitem{KLVSCP1}
{\sc M.~Khalkhali, G.~Landi, and W.~D. van Suijlekom}, {\em Holomorphic
  structures on the quantum projective line}, Int. Math. Res. Not. IMRN,
  (2011), pp.~851--884.

\bibitem{KKCP2}
{\sc M.~Khalkhali and A.~Moatadelro}, {\em The homogeneous coordinate ring of
  the quantum projective plane}, J. Geom. Phys., 61 (2011), pp.~276--289.

\bibitem{KKCPN}
\leavevmode\vrule height 2pt depth -1.6pt width 23pt, {\em Noncommutative
  complex geometry of the quantum projective space}, J. Geom. Phys., 61 (2011),
  pp.~2436--2452.

\bibitem{KSLeabh}
{\sc A.~Klimyk and K.~Schm\"udgen}, {\em Quantum Groups and Their
  Representations}, Texts and Monographs in Physics, Springer-Verlag, 1997.

\bibitem{KM}
{\sc J.-L. Koszul and B.~Malgrange}, {\em Sur certaines structures fibr\'{e}es
  complexes}, Arch. Math. (Basel), 9 (1958), pp.~102--109.

\bibitem{KramerSpherical}
{\sc M.~Kr\"{a}mer}, {\em Sph\"{a}rische {U}ntergruppen in kompakten
  zusammenh\"{a}ngenden {L}iegruppen}, Compositio Math., 38 (1979),
  pp.~129--153.

\bibitem{LusztigLeabh}
{\sc G.~Lusztig}, {\em Introduction to quantum groups}, vol.~110 of Progress in
  Mathematics, Birkh\"{a}user Boston, Inc., Boston, MA, 1993.

\bibitem{Maj}
{\sc S.~Majid}, {\em Noncommutative {R}iemannian and spin geometry of the
  standard {$q$}-sphere}, Comm. Math. Phys., 256 (2005), pp.~255--285.

\bibitem{MarcoConj}
{\sc M.~Matassa}, {\em K\"{a}hler structures on quantum irreducible flag
  manifolds}, J. Geom. Phys., 145 (2019), pp.~103477, 16.

\bibitem{noumi0}
{\sc M.~Noumi, H.~Yamada, and K.~Mimachi}, {\em Zonal spherical functions on
  the quantum homogeneous space {${\rm SU}_q(n+1)/{\rm SU}_q(n)$}}, Proc. Japan
  Acad. Ser. A Math. Sci., 65 (1989), pp.~169--171.

\bibitem{NoumiUn}
\leavevmode\vrule height 2pt depth -1.6pt width 23pt, {\em Finite-dimensional
  representations of the quantum group {${\rm GL}_q(n;{\bf C})$} and the zonal
  spherical functions on {${\rm U}_q(n-1)\backslash{\rm U}_q(n)$}}, Japan. J.
  Math. (N.S.), 19 (1993), pp.~31--80.

\bibitem{MMF1}
{\sc R.~\'{O}~Buachalla}, {\em Quantum bundle description of quantum projective
  spaces}, Comm. Math. Phys., 316 (2012), pp.~345--373.

\bibitem{MMF3}
\leavevmode\vrule height 2pt depth -1.6pt width 23pt, {\em Noncommutative
  {K}\"{a}hler structures on quantum homogeneous spaces}, Adv. Math., 322
  (2017), pp.~892--939.

\bibitem{OSV}
{\sc R.~\'O~Buachalla, J.~\v{S}\v{t}ovi\v{c}ek, and A.-C. van Roosmalen}, {\em
  A {K}odaira vanishing theorem for noncommutative {K}\"ahler structures}.
\newblock {arXiv preprint math.QA/1801.08125.}, 2018.

\bibitem{PW}
{\sc B.~Parshall and J.~P. Wang}, {\em Quantum linear groups}, Mem. Amer. Math.
  Soc., 89 (1991), pp.~vi+157.

\bibitem{RigalZadunaisky}
{\sc L.~Rigal and P.~Zadunaisky}, {\em Quantum toric degeneration of quantum
  flag and {S}chubert varieties}, Transform. Groups, 26 (2021), pp.~1113--1143.

\bibitem{SerreGAGA}
{\sc J.-P. Serre}, {\em G\'{e}om\'{e}trie alg\'{e}brique et g\'{e}om\'{e}trie
  analytique}, Ann. Inst. Fourier (Grenoble), 6 (1955/56), pp.~1--42.

\bibitem{Soib}
{\sc Y.~S. So\u{\i}belman}, {\em On the quantum flag manifold}, Funktsional.
  Anal. i Prilozhen., 26 (1992), pp.~90--92.

\bibitem{Stok}
{\sc J.~V. Stokman}, {\em The quantum orbit method for generalized flag
  manifolds}, Math. Res. Lett., 10 (2003), pp.~469--481.

\bibitem{TaftTowber}
{\sc E.~Taft and J.~Towber}, {\em Quantum deformation of flag schemes and
  {G}rassmann schemes. {I}. {A} {$q$}-deformation of the shape-algebra for
  {${\rm GL}(n)$}}, J. Algebra, 142 (1991), pp.~1--36.

\bibitem{T72}
{\sc M.~Takeuchi}, {\em A correspondence between {H}opf ideals and sub-{H}opf
  algebras}, Manuscripta Math., 7 (1972), pp.~251--270.

\bibitem{Tak}
{\sc M.~Takeuchi}, {\em Relative {H}opf modules---equivalences and freeness
  criteria}, J. Algebra, 60 (1979), pp.~452--471.

\end{thebibliography}

\end{document}